\title{Laplacian Spectrum and Domination in Trees}
\author{Deepak Rajendraprasad\footnote{\href{mailto:deepak@iitpkd.ac.in}{deepak@iitpkd.ac.in}} and Durga R. Sankaranarayanan\footnote{\href{mailto:112514003@smail.iitpkd.ac.in}{112514003@smail.iitpkd.ac.in}}\footnote{
A part of this work was done while the second author was a student at Indian Institute of Science Education and Research, Tirupati.}}
\affil{Indian Institute of Technology Palakkad, Palakkad, India}
\date{}
\newtheorem{theorem}{Theorem}
\newtheorem{lemma}[theorem]{Lemma}
\newtheorem{observation}[theorem]{Observation}
\newtheorem{corollary}[theorem]{Corollary}
\theoremstyle{definition}
\newtheorem{definition}{Definition}
\theoremstyle{remark}
\newtheorem*{remark}{Remark}
\newtheorem{case}{Case}
\begin{document}

\maketitle

\begin{abstract}

For a finite simple undirected graph $G$, let $\gamma(G)$ denote the size of a smallest dominating set of $G$ and $\mu(G)$ denote the number of eigenvalues of the Laplacian matrix of $G$ in the interval $[0,1)$, counting multiplicities. Hedetniemi, Jacobs and Trevisan [Eur.\ J.\ Comb. 2016] showed that for any graph $G$, $\mu(G) \leqslant \gamma(G)$. Cardoso, Jacobs and Trevisan [Graphs Combin.\ 2017] asks whether the ratio $\gamma(T)/\mu(T)$ is bounded by a constant for all trees $T$. We answer this question by showing that this ratio is less than $4/3$ for every tree. We establish the optimality of this bound by constructing an infinite family of trees where this ratio approaches $4/3$. We also improve this upper bound for trees in which all the vertices other than leaves and their parents have degree at least $k$, for every $k \geqslant 3$. We show that, for such trees $T$, $\gamma(T)/\mu(T) < 1 + 1/((k-2)(k+1))$. 

\paragraph{Keywords.} Laplacian Spectrum, Domination Number, Trees.
\paragraph{Mathematics Subject Classification.} 05C50, 05C69, 15A18, 15A42.
\end{abstract}

\section{Introduction}
Let $G$ be a simple undirected graph on the vertex-set $V(G) = \{v_1, \ldots, v_n\}$. The \emph{adjacency matrix} $A(G) = [a_{ij}]$ of $G$ is an $n \times n$ matrix where the entry $a_{ij}$ is $1$ if there is an edge between vertices $v_i$ and $v_j$, and $0$ otherwise. The \emph{Laplacian matrix} of $G$ is $L(G) = D(G) - A(G)$, where $D(G)$ is a diagonal matrix whose $i$-th diagonal entry is the degree of $v_i$. The multi-set of eigenvalues of $L(G)$ is termed the \emph{Laplacian spectrum} of $G$. The Laplacian spectrum of $G$ is always a subset of $[0,n]$. Let $m_GI$ denote the number of Laplacian eigenvalues of $G$ within a real interval $I$, counting multiplicities. The value of $m_G I$ for different choices of $I$ bound many combinatorial parameters of the graph $G$. In this paper we study how $m_G[0,1)$ bounds  the domination number of trees. Henceforth, we abbreviate $m_G[0,1)$ as $\mu(G)$. 

A set of vertices $D$ in $G$ is called \emph{dominating} if every vertex of $G$ is either in $D$ or is adjacent to a vertex in $D$. The \emph{domination number} $\gamma(G)$ is the size of a smallest dominating set in $G$. The connection between Laplacian spectrum and the domination number of a graph was first investigated by Hedetniemi, Jacobs, and Trevisan \cite{hedetniemi2016domination} in 2016 who established the following lemma.

\begin{lemma}[\cite{hedetniemi2016domination}] \label{lem:ubmu}
    For every graph $G$, $\mu(G)\leqslant \gamma(G)$.
\end{lemma}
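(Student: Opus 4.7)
The plan is to translate the eigenvalue-counting problem into an inertia statement about $L(G) - I$ and then apply Cauchy's interlacing theorem using a minimum dominating set to select which rows/columns to delete. The first observation is that $\mu(G)$ equals the number of strictly negative eigenvalues of the symmetric matrix $M := L(G) - I$, so it suffices to show that $M$ has at most $\gamma(G)$ negative eigenvalues.

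Next, I would fix a minimum dominating set $D \subseteq V(G)$ with $|D| = \gamma(G)$ and let $B$ be the principal submatrix of $M$ indexed by $V(G) \setminus D$. Cauchy interlacing says that if $A$ is a symmetric $n \times n$ matrix and $B$ is one of its principal submatrices of size $n - \gamma$, then the number of negative eigenvalues of $A$ is at most the number of negative eigenvalues of $B$ plus $\gamma$. So it is enough to prove that $B$ is positive semidefinite.

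The key decomposition is then: for each $v \in V(G) \setminus D$, split $\deg_G(v)$ as $\deg_H(v) + d_D(v)$, where $H = G[V(G) \setminus D]$ and $d_D(v)$ is the number of neighbors of $v$ in $D$. Since off-diagonal entries of $B$ indexed by $V(G) \setminus D$ agree with those of $-A(H)$, this gives the identity
\[
B \;=\; L(H) \;+\; \mathrm{diag}\bigl(d_D(v) - 1 : v \in V(G) \setminus D\bigr).
\]
The first summand is positive semidefinite because it is itself a graph Laplacian. The second summand is positive semidefinite precisely because $D$ is a dominating set: every $v \notin D$ has at least one neighbor in $D$, so $d_D(v) \geq 1$. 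Hence $B \succeq 0$, which together with Cauchy interlacing yields $\mu(G) \leqslant \gamma(G)$.

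The main conceptual step is recognising that the combinatorial hypothesis (domination) is exactly what forces the diagonal correction $d_D(v) - 1$ to be nonnegative; once that is in place, the proof is a short congruence/interlacing argument with no genuine technical obstacle.
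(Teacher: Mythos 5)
Your argument is correct and complete: the reduction of $\mu(G)$ to the negative inertia of $L(G)-I$, the interlacing step after deleting the $\gamma(G)$ rows and columns indexed by a minimum dominating set $D$, and the decomposition $B = L(H) + \mathrm{diag}(d_D(v)-1)$ with both summands positive semidefinite (the second precisely because $D$ dominates) all check out, so $B \succeq 0$ and Cauchy interlacing gives at most $\gamma(G)$ negative eigenvalues of $L(G)-I$. Note, however, that the paper itself does not prove this lemma; it is imported verbatim from Hedetniemi, Jacobs and Trevisan, whose published argument takes a different route: a dominating set of size $\gamma$ yields a spanning star forest $F$ with $\gamma$ components, $L(G)$ is written as $L(F)$ plus the positive semidefinite Laplacian of the remaining edges, and Weyl's inequalities transfer the fact that each star contributes exactly one Laplacian eigenvalue below $1$ to conclude $m_G[0,1) \leqslant \gamma$. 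Your interlacing proof is a genuinely different and equally short alternative; its advantage is that it needs only the standard eigenvalue interlacing for principal submatrices and makes the role of domination completely transparent (it is exactly the condition $d_D(v) \geqslant 1$ for $v \notin D$), while the star-forest proof has the side benefit of exhibiting a concrete spanning subgraph responsible for the small eigenvalues.
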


This study was continued by Cardoso, Jacobs, and Trevisan \cite{cardoso2017laplacian}. Along with $\mu(G) = m_G[0,1)$, they also studied the dependence of $\gamma(G)$ on $\nu(G) = m_G[2,n]$. They proved that $\nu(G) \geqslant \gamma(G)$ for graphs without isolated vertices. They further demonstrated that ratios $\gamma(G)/\mu(G)$ and $\nu(G)/\gamma(G)$ can become arbitrarily large. In contrast, they showed that for trees and graphs which can be made acyclic by removing a bounded number of edges, the ratio $\nu(G)/\gamma(G)$ is bounded. The study concluded with the problem of determining whether the ratio $\gamma(T)/\mu(T)$ is bounded for trees. The main contribution of this paper is the following affirmative answer to this question. 

\begin{theorem}\label{thm:1}
   For every tree $T$,
   \[1\leqslant \frac{\gamma(T)}{\mu(T)}<\frac{4}{3}.\]
\end{theorem}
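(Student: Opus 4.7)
The lower bound $1 \leqslant \gamma(T)/\mu(T)$ is exactly Lemma~\ref{lem:ubmu}, so the entire work is to establish $3\gamma(T) < 4\mu(T)$ for every tree $T$. The plan is to combine the Jacobs--Trevisan diagonalization-by-congruence algorithm for $L(T) - I$ with an induction on $|V(T)|$. That algorithm roots $T$, traverses it in post-order, assigns each vertex a rational value, and declares $\mu(T)$ to be the number of negative values so produced (with a well-defined local rule when a child's value is $0$). Because the algorithm is local, deleting a subtree $S$ from $T$ and knowing the value $S$ hands to its parent determines the change in $\mu$ exactly. This is what makes induction through ``tree surgery'' tractable.

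Before the induction I would collect a short list of preliminary reductions that do not affect the ratio $\gamma/\mu$. For example, if a support vertex has two leaf neighbours, one of them may be removed without changing either $\gamma$ or $\mu$; a similar contraction handles two adjacent support vertices. After these reductions we may assume that every support vertex has exactly one leaf, and hence that the pendant subtree hanging from a deepest non-leaf vertex has a very restricted form: essentially a pendant edge, a pendant $P_2$, a pendant $P_3$, or a small broom.

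The induction then roots $T$ at an arbitrary non-leaf and examines a deepest non-leaf vertex $v$. Depending on the shape at and just above $v$, I would perform one of a finite catalogue of surgeries that deletes a bounded-size piece, producing a smaller tree $T'$ with $\gamma(T) \leqslant \gamma(T') + \Delta_\gamma$ and $\mu(T) \geqslant \mu(T') + \Delta_\mu$. For each surgery I would verify $4\Delta_\mu \geqslant 3\Delta_\gamma$ by a direct computation --- exhibiting the added dominating vertices and tracing the Jacobs--Trevisan values through the deleted piece. The induction hypothesis on $T'$ then gives $4\mu(T) - 3\gamma(T) \geqslant 4\mu(T') - 3\gamma(T') + (4\Delta_\mu - 3\Delta_\gamma) > 0$. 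The base cases ($|V(T)| \leqslant 3$) are handled by inspection.

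The main obstacle is the surgery in which the subtree at $v$ is a pendant $P_3$: here the contribution is exactly $\Delta_\gamma = 1$ and $\Delta_\mu = 3/4$ per pendant, matching the claimed extremal ratio $4/3$, so a one-pendant-at-a-time argument cannot give a strict inequality. To overcome this, I would not perform the surgery on a single pendant $P_3$ in isolation; instead I would amortise across a batch of pendant $P_3$s sharing a common grandparent (producing a combined $\Delta_\gamma = 4, \Delta_\mu = 3$ block with a genuine slack contributed by the grandparent's Jacobs--Trevisan update), and draw the final unit of slack from the base case. This is also the place where the $4/3$ bound is seen to be sharp, aligning with the infinite extremal family promised in the abstract.
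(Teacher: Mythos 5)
Your proposal takes a genuinely different route from the paper (which runs a single bottom-up pass of the Braga--Rodrigues--Trevisan diagonalization with $\alpha=-1$, augmented by a weight-propagation scheme that builds a dominating set $D$ with $|D|\leqslant \mu(T)+\tfrac13(p(T)-1)$, and never performs induction or surgery), but as described it has a gap at its load-bearing step. The claim that ``deleting a subtree $S$ from $T$ and knowing the value $S$ hands to its parent determines the change in $\mu$ exactly'' is not true in the form you need it. When you delete a pendant piece rooted at a child $c$ of $v$, two things change at $v$: the initialization $f(v)=d(v)-1$ drops because $d(v)$ drops, and the term $-1/f(c)$ disappears. These do not cancel in general (for instance, for a pendant $P_3$ the zero-child rule deletes the edge to $v$, so removing the piece changes $f(v)$ by exactly $-1$), and the perturbation then cascades upward and can flip the signs of vertices arbitrarily far above the surgery site. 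Consequently $\Delta_\mu=\mu(T)-\mu(T')$ is \emph{not} computable ``by a direct computation on the deleted piece''; it depends on the entire remainder of the tree. The paper's only reduction of this kind, Lemma~\ref{lem:ratios}, works precisely because contracting a clean path of length three is engineered so that the value handed to the parent is \emph{identical} before and after ($f_T(a)=f_{T'}(e)$), leaving the rest of the run untouched; none of the pendant-deletion surgeries in your catalogue has this property, and you would need either to redesign every surgery to be value-preserving or to prove a separate interlacing-type lemma controlling how much $\mu$ of the remainder can move. That lemma is not obvious and is where the real difficulty lives.

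Two smaller points. First, your worry about strictness in the $P_3$ case is actually the lesser problem: if every surgery satisfies the non-strict inequality $4\Delta_\mu\geqslant 3\Delta_\gamma$ and the base case is strict, the induction $4\mu(T)-3\gamma(T)\geqslant 4\mu(T')-3\gamma(T')>0$ already preserves strictness, so no amortization across batches is needed for that purpose --- the single unit of slack is exactly the paper's ``$-1$'' in $\mu(T)+\tfrac13(p(T)-1)$, coming from the root. Second, your preliminary reductions are stated loosely: a deepest non-leaf vertex is a penultimate vertex and hence (after removing duplicate leaves) supports only a single pendant edge, so the ``pendant $P_2$/$P_3$/broom'' shapes must be anchored at a deepest \emph{deep} vertex, and you would need to verify that each reduction leaves both $\gamma$ and $\mu$ unchanged (true for duplicate leaves, but ``a similar contraction handles two adjacent support vertices'' is not something you can wave at --- the paper's reduction concerns adjacent degree-two vertices, which is a different configuration). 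Until the non-locality of $\Delta_\mu$ is addressed, the proposal is a plausible plan rather than a proof.
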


We show the optimality of this upper bound by constructing an infinite family of trees for which the ratio approaches $4/3$ (c.\ f. Section \ref{sec:example}.) So far, we did not know of any family of trees where this ratio was larger than $25/24$ \cite{cardoso2017laplacian}. We also show that the bound can be improved for trees in which all deep vertices (Definition~\ref{def:deep}) have high degree.

\begin{definition}\label{def:deep}
   A vertex of a tree is called a \emph{leaf} if its degree is $1$, \emph{penultimate} if it is adjacent to a leaf and \emph{deep} if it is neither. We denote the number of penultimate vertices of a tree $T$ by $p(T)$.
\end{definition} 

\begin{theorem}\label{thm:2}
    If $T$ is a tree in which all deep vertices have a degree at least $k$, $k\geqslant 3$, \[\gamma(T)<\left(1+\frac{1}{(k-2)(k+1)}\right)p(T).\]
\end{theorem}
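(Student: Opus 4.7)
Let $D_0$ denote the set of deep vertices and $L$ the set of leaves of $T$. My plan is to construct a minimum dominating set of $T$ of the form $P \cup S$ with $S$ a small subset of $D_0$, then bound $|S|$ from above and $|P|$ from below in a compatible way; specifically, I aim to show $|S| \leq |D_0|/(k+1)$ and $|P| > (k-2)|D_0|$, from which $\gamma(T) \leq |P|+|S| < \left(1+\frac{1}{(k-2)(k+1)}\right) p(T)$ follows immediately.

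A standard exchange argument (replacing any leaf in a dominating set by its penultimate parent) shows that $T$ has a minimum dominating set $D \supseteq P$. Writing $S = D \setminus P$, the set $S$ consists of deep vertices and must dominate precisely the set $R \subseteq D_0$ of deep vertices with no penultimate neighbor. Since $R$-vertices have only deep $T$-neighbors, $S$ dominates $R$ within the forest $F := T[D_0]$.

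For the lower bound on $|P|$, I use edge counting. Each deep vertex has $T$-degree at least $k$, so $e_{PD} + 2e_{DD} \geq k|D_0|$, where $e_{XY}$ denotes the number of $T$-edges between $X$ and $Y$. Since $F$ is a forest with $c_F$ components, $e_{DD} \leq |D_0|-c_F$, so $e_{PD} \geq (k-2)|D_0| + 2c_F$. Contracting each component of $F$ in $T$ to a single vertex gives a tree on $|L|+|P|+c_F$ vertices (no multi-edges arise because each penultimate has at most one neighbor in any given $F$-component, else a cycle in $T$ would form); its $|L|+|P|+c_F-1$ edges decompose as $|L|+e_{PP}+e_{PD}$, and therefore $|P| = e_{PP}+e_{PD}-c_F+1 \geq (k-2)|D_0|+c_F+1 > (k-2)|D_0|$.

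For the upper bound on $|S|$, I use a rooted greedy algorithm on $F$ together with a charging argument. Root each component of $F$ arbitrarily and process vertices in post-order. On visiting $v$: if some child $c$ is marked ``needs-dom'' (meaning $c \in R$ is not yet dominated), add $v$ to $S$ and mark $v$ and its children as dominated; otherwise, if $v \in R$ is undominated, mark $v$ ``needs-dom''; if the root of a component is still ``needs-dom'' when processing finishes, add it to $S$. The resulting $S$ dominates $R$. To each $u \in S$ triggered by a child $c_u$, assign $X_u = \{u, c_u\} \cup \mathrm{children}(c_u)$; if $u$ is the root added at the end, let $X_u = \{u\} \cup \mathrm{children}(u)$. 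The degree-$k$ condition on the relevant $R$-center forces $|X_u| \geq k+1$ in either case. Since a trigger child has no $S$-descendant among its own children (otherwise it would already be dominated and hence not marked ``needs-dom''), a short case analysis on ancestor-descendant relations in the rooted forest shows that the sets $X_u$ for distinct $u \in S$ are pairwise disjoint, giving $(k+1)|S| \leq |D_0|$. The main obstacle is this disjointness check; the delicate subcase is when two $S$-vertices lie on a common root-to-leaf path, where one must track the depths of the trigger children to ensure $X_u \cap X_{u'} = \emptyset$.
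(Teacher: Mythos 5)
Your proof is correct, but it takes a genuinely different route from the paper's. The paper proves Theorem~\ref{thm:2} with a single bottom-up weight-propagation pass (Algorithm~\ref{alg:3}): each penultimate vertex is seeded with weight $1+\epsilon$, $\epsilon=1/((k-2)(k+1))$, weights are pushed rootward, a vertex joins the dominating set exactly when its accumulated weight reaches $1+\epsilon$ (paying $1$ on admission), and the degree hypothesis forces any vertex not dominated via a penultimate neighbour to deliver weight at least $\bigl((k-1)^2+(k-2)\bigr)\epsilon\geqslant 1+\epsilon$ to its parent. You instead decouple the bound into two independent structural inequalities: a contraction and edge-counting argument giving $p(T)\geqslant (k-2)\lvert D_0\rvert + c_F + 1$, and a greedy-plus-charging argument on the deep forest $F=T[D_0]$ producing a set $S$ dominating $R$ with $(k+1)\lvert S\rvert\leqslant \lvert D_0\rvert$; combining the two recovers exactly the paper's bound. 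I checked the step you flagged: the disjointness of the sets $X_u$ does hold, and the single observation that a trigger child can have no child in $S$ (else it would be dominated when processed and never marked) kills every overlap pattern, including the delicate chain in which $u$ is a child of $c_{u'}$ --- that configuration is impossible precisely because $u\in S$ would dominate $c_{u'}$. Your route is longer and needs that careful check, but it buys more refined intermediate information, namely the two separate inequalities $p(T)>(k-2)\lvert D_0\rvert$ and $\gamma(T)\leqslant p(T)+\lvert D_0\rvert/(k+1)$, whereas the paper's amortized argument certifies only the final ratio. One cosmetic remark: you never need a \emph{minimum} dominating set containing $P$; it suffices that $P\cup S$ dominates $T$, which is immediate since every vertex outside $R$ is dominated by $P$ alone.
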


Since $p(T)\leqslant \mu(T)$ (Lemma~\ref{lem:ubpenultimate}), this result improves the upper bound in Theorem~\ref{thm:1} for every $k \geqslant 3$. The upper bound in Theorem~\ref{thm:1} is established by analysing an extended version (Algorithm~\ref{alg:2}) of an algorithm (Algorithm~\ref{alg:1}) by Braga, Rodrigues, and Trevisan \cite{braga2013distribution}. While Algorithm~\ref{alg:1} computes the number of Laplacian eigenvalues less than, equal to and greater than a certain value, our extension also constructs a dominating set of the tree enroute. We tried to apply the same technique to improve the upper bound of $2$ on the ratio $\nu(T)/\gamma(T)$ for trees \cite{cardoso2017laplacian}. 
However, we discover a family of caterpillars (trees with a dominating path) for which this ratio approaches $2$ (c.\ f.\ Section~\ref{sec:laplacian2}).

\paragraph{Background.} We now list some of the known connections between $m_G I$ for different choices of $I$ and combinatorial parameters of graphs. In 1990, Grone, Merris, and Sunder \cite{grone1990laplacian} proved that $m_G[0,1)\geqslant q(G)$, where $q(G)$ is the number of vertices in $G$ which are not pendant but adjacent to a pendant vertex. In 2013, Braga, Rodrigues, and Trevisan \cite{braga2013distribution} demonstrated that for any tree $T$ with more than one vertex, the inequality $m_T[0,2)\geqslant \lceil\frac{n}{2}\rceil$ holds. They further conjectured that $m_T[0,\overline{d})\geqslant \lceil\frac{n}{2}\rceil$, where $\overline{d}$ represents the average degree of $T$. This conjecture was subsequently confirmed by Jacobs, Oliveira, and Trevisan \cite{jacobs2021most}, and Sin \cite{sin2020number} independently in 2021. In 2022, Ahanjideh et al.\ \cite{ahanjideh2022laplacian} established that, for a connected graph $G$, $m_G[0,1)\leqslant \alpha(G)\leqslant m_G[0,n-\alpha(G)]$, where $\alpha(G)$ is the independence number of $G$. They also characterized graphs for which $m_G[0,1)=\alpha(G)=\frac{n}{2}$. Additionally, they discovered that for connected triangle-free or quadrangle-free graphs $G$, $m_G(n-1,n]\leqslant 1$, and characterized graphs such that $m_G[0,1]=n-1$ or $n-2$. Further, they showed that in connected graphs, $m_G(n-1,n]\leqslant \chi(G)-1$, where $\chi(G)$ is the chromatic number of $G$. Later, Akbari et al.\ \cite{akbari2023classification}, and Cui and Ma \cite{cui2024laplacian} derived several bounds on $m_GI$ based on various graph parameters. Also in 2024, Xu and Zhou explored the Laplacian eigenvalue distribution relative to the diameter of the graph \cite{xu2024dia,xu2024laplacian}. Recently, Zhen, Wong, and Songnian Xu \cite{zhen2025laplacian} and Leyou Xu and Zhou \cite{xu2025girth} studied how the Laplacian eigenvalue distribution relates to the girth of a graph, while Akbari, Alaeiyan, and Darougheh \cite{akbari2025laplacian} studied it in terms of degree sequence. In 2025, Guo, Xue and Liu \cite{guo2025laplacian} proved that for a tree $T$ with diameter $d$, $m_T[0,1)\geqslant\frac{d+1}{3}$, and characterized trees that achieve this bound. Moon and Park \cite{moon2025laplacian} gave a lower bound on $m_G[0,1)$ for unicyclic graphs in terms of its diameter and girth.

\paragraph{Organization of the paper.}
Section \ref{sec:sylvester} examines an algorithm by Braga, Rodrigues, and Trevisan \cite{braga2013distribution} which can be used to count the number of Laplacian eigenvalues of a tree in any given interval. This is an extension of an algorithm by Jacobs and Trevisan \cite{jacobs2011locating}, which does the same for the adjacency spectrum of trees.
Section \ref{sec:proof1} details the proof of Theorem~\ref{thm:1} and Section \ref{sec:example} demonstrates the tightness of the same by constructing an infinite family of trees where the ratio $\gamma(T)/\mu(T)$ approaches $4/3$. 
Section \ref{sec:proof2} proves Theorem~\ref{thm:2}. Reading the proof of Theorem~\ref{thm:2} first may help in following the proof of Theorem~\ref{thm:1} more easily.  Section \ref{sec:laplacian2}, presents an infinite family of trees for which the ratio $\nu(T)/\gamma(T)$ approaches $2$. Section \ref{sec:conclusion} presents some final remarks and discusses a future direction.

\section{Sylvester's Law of Inertia}\label{sec:sylvester}

The \emph{inertia} of a matrix is the number of its positive, negative, and zero eigenvalues, counting multiplicities.
Two matrices $A$ and $B$ over a field are called \emph{congruent} if there exists an invertible matrix $P$ over the same field such that $P^TAP=B$.

\begin{lemma}[Sylvester's Law of Inertia \cite{horn2012matrix}]
    Two real symmetric matrices are congruent if and only if they have the same inertia.
\end{lemma}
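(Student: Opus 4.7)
The plan is to prove both directions by reducing every real symmetric matrix to a canonical diagonal form under congruence, and then to show that the inertia is itself a congruence invariant via a variational characterization. Throughout, write the inertia of a symmetric matrix $M$ as the triple $(n_+(M),n_-(M),n_0(M))$ counting positive, negative, and zero eigenvalues.

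The first step is the canonical form. By the spectral theorem, any real symmetric $A$ can be orthogonally diagonalized as $A = Q\Lambda Q^T$, where $\Lambda$ is diagonal with the eigenvalues of $A$ on the diagonal. Let $S$ be the diagonal matrix whose $i$-th entry is $|\lambda_i|^{-1/2}$ if $\lambda_i \neq 0$ and $1$ otherwise, and let $P = QS$. After a permutation (which is itself a congruence), $P^T A P$ equals the block diagonal matrix $\mathrm{diag}(I_{n_+(A)},-I_{n_-(A)},0_{n_0(A)})$. This immediately yields the easy direction: if two real symmetric matrices have the same inertia, they are congruent to the same canonical form, and congruence is an equivalence relation, so they are congruent to each other.

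The harder direction is to show that congruent matrices have the same inertia. Here I would prove the variational characterization
\[
n_+(A) \;=\; \max\bigl\{\dim W : W \subseteq \mathbb{R}^n \text{ a subspace with } x^T A x > 0 \text{ for all } x\in W\setminus\{0\}\bigr\},
\]
and the analogous identities for $n_-(A)$ and $n_+(A)+n_0(A)$. The lower bound in each case is immediate: take $W$ to be the span of the eigenvectors of $A$ associated with positive eigenvalues. The upper bound is the main obstacle. For it, let $V_{\le 0}$ be the span of eigenvectors with non-positive eigenvalues, so $\dim V_{\le 0} = n - n_+(A)$; any subspace $W$ of dimension strictly exceeding $n_+(A)$ must meet $V_{\le 0}$ nontrivially by the standard dimension count $\dim W + \dim V_{\le 0} > n$, and on a nonzero vector of the intersection $x^T A x \le 0$, contradicting the defining property of $W$.

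Once the characterization is in hand, congruence-invariance is almost automatic. If $B = P^T A P$ with $P$ invertible, then $x^T B x = (Px)^T A (Px)$, and $x \mapsto Px$ is a linear isomorphism $\mathbb{R}^n \to \mathbb{R}^n$ that sends subspaces bijectively to subspaces of the same dimension while preserving the sign of the quadratic form. Hence the maximum in the variational characterization is the same for $A$ and for $B$, so $n_+(A) = n_+(B)$; the analogous argument yields $n_-(A) = n_-(B)$, and since both matrices are $n\times n$ we also get $n_0(A) = n_0(B)$. Combined with the first step, this completes the proof.
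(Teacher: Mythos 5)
Your proof is correct. Note, however, that the paper does not prove this lemma at all: it is quoted as a classical theorem with a citation to Horn and Johnson's \emph{Matrix Analysis}, and the paper only \emph{uses} it (to count Laplacian eigenvalues in an interval via the inertia of $L(T)-\beta I$). What you have written is the standard textbook proof --- orthogonal diagonalization followed by scaling to reach the canonical form $\mathrm{diag}(I_{n_+},-I_{n_-},0_{n_0})$ for the ``same inertia $\Rightarrow$ congruent'' direction, and the variational (maximal positive-definite subspace) characterization of $n_+$ together with the dimension-count argument for the converse. All steps check out: the scaling matrix $S$ does produce $\pm 1$ and $0$ entries, the subspace intersection argument correctly bounds $\dim W$ by $n_+(A)$, and invertibility of $P$ is exactly what makes the variational quantity a congruence invariant, with $n_0$ recovered from $n_+ + n_- + n_0 = n$. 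So your write-up is a valid self-contained substitute for the citation; it neither conflicts with nor duplicates anything in the paper's own arguments.
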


Sylvester's Law of Inertia helps one to count the number of eigenvalues of a real symmetric matrix $M$ in any interval $I$. For example, if $I = [a,b)$, the number of eigenvalues of $M$ can be found from the inertia of the matrices $M - aI$ and $M-bI$. In particular $\mu(G)$ is equal to the number of negative eigenvalues of $L(G) - I$. We use Algorithm~\ref{alg:1} by Braga, Rodrigues, and Trevisan \cite{braga2013distribution} which determines the inertia of the matrix $L(T) + \alpha I$ for a tree $T$ by doing a single bottom-up scan of $T$ without using any matrix computations. This algorithm outputs a diagonal matrix that is congruent to $L(T)+\alpha I$. Each entry in the diagonal of the matrix corresponds to the values of the respective vertices at the end of the algorithm.

\begin{algorithm}
    \caption{ \cite{braga2013distribution}}\label{alg:1}
    \textbf{Input:} Rooted Tree $T$, scalar $\alpha$ \\
    \textbf{Output:} Diagonal matrix $D$ congruent to $L(T) + \alpha I$
    
    \begin{algorithmic}[1] 
        \State Initialize $f(v) := d(v) + \alpha$ for all vertices $v$
        \State Order vertices as $v_1,\dots,v_n$ from the lowest level to the highest level (root)
        \For{$k = 1$ to $n$}
            \If{$v_k$ is a leaf}
                \State \textbf{continue}
            \ElsIf{$f(c) \neq 0$ for all children $c$ of $v_k$}
                \State $f(v_k) := f(v_k) - \sum\limits_{\substack{c}} \frac{1}{f(c)}$, where the sum over all children of $v_k$.
            \Else
                \State select one child $v_j$ of $v_k$ for which $f(v_j) = 0$
                \State $f(v_k) := -\frac{1}{2}$, $f(v_j) := 2$
                       \If{$v_k$ has a parent $v_t$}
                             \State remove the edge $v_kv_t$
                        \EndIf
            \EndIf
        \EndFor
    \State Let $D$ be a diagonal matrix with $d_{ii} = f(v_i)$ for each vertex $v_i$ in $T$
    \State \textbf{return} $D$
    \end{algorithmic}
\end{algorithm}

\begin{theorem}[\cite{jacobs2011locating}]
    \label{thm:diagonalize}
   Let $D$ be the diagonal matrix produced by Algorithm~\ref{alg:1} applied to a tree $T$ rooted at any of its vertices and scalar $\alpha=-\beta$. Then, $m_T(\beta,n]$, $m_T[0,\beta)$,$m_T[\beta]$ are equal to the number of positive, negative, and zero values on the diagonal of $D$.
\end{theorem}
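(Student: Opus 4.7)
The plan is to invoke Sylvester's Law of Inertia. By that law, it suffices to prove that the diagonal matrix $D$ returned by the algorithm is congruent to $L(T) + \alpha I = L(T) - \beta I$: the negative, zero, and positive diagonal entries of $D$ then count the negative, zero, and positive eigenvalues of $L(T) - \beta I$, which, since the Laplacian spectrum lies in $[0, n]$, are exactly the Laplacian eigenvalues in $[0, \beta)$, equal to $\beta$, and in $(\beta, n]$, respectively.

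To prove the congruence $D \cong L(T) + \alpha I$, I would interpret each iteration of the algorithm as performing one or more elementary symmetric Gaussian-elimination steps, i.e., congruence transformations $M \mapsto E^T M E$ by elementary matrices $E$. The ``working matrix'' is initialised to $L(T) + \alpha I$, whose diagonal is exactly $d(v) + \alpha = f(v)$ and whose only off-diagonal entries are $-1$ on edges of $T$. The inductive invariant is that after iteration $k$ the working matrix remains congruent to $L(T) + \alpha I$ and its diagonal equals the current $f$-values. In the normal branch, I would pivot on each child $c$ of $v_k$ with $f(c) \neq 0$ by adding $(1/f(c))$ times row $c$ to row $v_k$ together with the mirror column operation; this kills the $(v_k, c)$ entry and subtracts $1/f(c)$ from $f(v_k)$, exactly matching the algorithm's update.

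The technically delicate case is when some child $v_j$ of $v_k$ has $f(v_j) = 0$, since pivoting on a zero is impossible. Here the key observation is that the principal $2 \times 2$ submatrix indexed by $v_j$ and $v_k$ has the form $\bigl(\begin{smallmatrix} 0 & -1 \\ -1 & f(v_k)\end{smallmatrix}\bigr)$, whose determinant is $-1$ and whose inertia is therefore $(1,1,0)$; by the two-dimensional instance of Sylvester's Law it is congruent to $\mathrm{diag}(2, -1/2)$, and one can write down an invertible $2 \times 2$ matrix $P$ realising this congruence explicitly. Extended by the identity on the other coordinates, $P$ gives a congruence of the full working matrix that realises the updates $f(v_j) := 2$ and $f(v_k) := -1/2$ while eliminating the $(v_j, v_k)$ entry. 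With the new nonzero pivot at $v_k$, a final symmetric Gaussian elimination step using row $v_k$ eliminates the $(v_k, v_t)$ entry to the parent, which is what the ``remove the edge $v_kv_t$'' instruction records. The main obstacle is the bookkeeping in this branch---tracking the effect of these local congruences on the rest of the working matrix and verifying that the inductive invariant is fully restored---which relies on the bottom-up processing order that keeps only edges to unprocessed ancestors live at any given time.
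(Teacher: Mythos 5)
The paper itself does not prove this theorem; it imports it from \cite{jacobs2011locating} (see also \cite{braga2013distribution}), so the comparison below is against the proof in those references. Your overall strategy --- reduce to showing $D$ is congruent to $L(T)+\alpha I$ via Sylvester's Law, then realise each iteration as symmetric Gaussian elimination --- is exactly the strategy used there, and your handling of the reduction and of the normal (nonzero-pivot) branch is correct.

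The gap is in the zero-pivot branch, precisely the part you defer as ``the main obstacle,'' and the order of operations you describe does not restore your invariant. Two concrete problems. First, in this branch the algorithm skips the $\sum_c 1/f(c)$ update entirely, so when $v_k$ is reached the $-1$ entries from $v_k$ to \emph{all} of its other children are still live, not just the entry to the parent $v_t$; your sketch never accounts for them. Second, if you diagonalise the $2\times 2$ block on $\{v_j,v_k\}$ first, the congruence $P$ mixes coordinates, so the surviving off-diagonal entries to $v_t$ and to the other children get spread across \emph{both} rows $v_j$ and $v_k$; eliminating them afterwards with the pivots $2$ and $-1/2$ creates fill-in between $v_t$ and the other children of $v_k$, and the single step you describe, ``using row $v_k$'' with pivot $-1/2$, would change the parent's diagonal by $-(-1)^2/(-1/2)=+2$, whereas ``remove the edge $v_kv_t$'' means $f(v_t)$ must be left unchanged. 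The fix (and the route taken in the cited proof) is to reverse the order: first use row and column $v_j$ --- which at this stage has diagonal $0$ and a single off-diagonal entry $-1$ in position $v_k$ --- to annihilate every other off-diagonal entry in row and column $v_k$. Because $v_j$'s diagonal is zero and $v_j$ is adjacent to nothing else, these operations alter no diagonal entry and create no fill-in; this is the algebraic content of the edge removal. Only then is the block $\begin{pmatrix} 0 & -1 \\ -1 & * \end{pmatrix}$ on $\{v_j,v_k\}$ isolated from the rest of the matrix, and it may be replaced by $\mathrm{diag}(2,-1/2)$ (any diagonal matrix with inertia $(1,1,0)$ would serve). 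With this reordering your induction closes; as written, the invariant ``the diagonal of the working matrix equals the current $f$-values'' is not maintained.
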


\begin{definition}
   The final value $f(v)$ assigned to a vertex $v$ by Algorithm~\ref{alg:1} is referred to as the \emph{inertia} of $v$. The vertex $v$ is called \emph{negative} if its inertia $f(v)$ is negative. We will say that a vertex $u$ is a \emph{zero-child} of its parent $v$ if $f(u) = 0$ when Algorithm~\ref{alg:1} starts to process $v$, irrespective of whether the inertia of $u$ is $0$ or $2$. 
\end{definition}

The main tool in the paper is an analysis of an extension of this algorithm (Algorithm~\ref{alg:2}). We will make repeated use of the following direct corollary of Theorem~\ref{thm:diagonalize} since our extended algorithm simulates a run of Algorithm~\ref{alg:1} with $\alpha = -1$.

\begin{corollary}
    \label{cor:mu}
    For a tree $T$, $\mu(T)$ is equal to the number of negative vertices in $T$ after a run of Algorithm~\ref{alg:1} on $T$ with $\alpha = -1$.
\end{corollary}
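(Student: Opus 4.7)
The plan is to derive the corollary as a direct specialization of Theorem~\ref{thm:diagonalize}. That theorem, applied to a tree $T$ rooted arbitrarily with scalar $\alpha = -\beta$, asserts in particular that $m_T[0,\beta)$ equals the number of negative diagonal entries of the output matrix $D$. I would simply set $\beta = 1$, i.e., run Algorithm~\ref{alg:1} with $\alpha = -1$, so that the conclusion becomes $m_T[0,1) = \#\{\, i : d_{ii} < 0\,\}$.

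Next I would translate the right-hand side into the vertex language of the corollary. By the second-to-last line of Algorithm~\ref{alg:1}, the diagonal of $D$ is given by $d_{ii} = f(v_i)$, where $f(v_i)$ is the value assigned to $v_i$ at the end of the run, which is precisely the inertia of $v_i$ by definition. A vertex was defined to be \emph{negative} exactly when its inertia is negative, so the count of negative diagonal entries of $D$ coincides with the count of negative vertices of $T$. Combining this with $\mu(T) = m_T[0,1)$, which is the abbreviation introduced in the introduction, yields the claimed equality.

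There is no real obstacle here: the corollary is bookkeeping that aligns the definitions of ``$\mu$'' and ``negative vertex'' with the $\beta=1$ instance of Theorem~\ref{thm:diagonalize}, and no additional argument is required.
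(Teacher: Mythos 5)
Your proposal is correct and matches the paper's reasoning exactly: the paper presents this as a direct corollary of Theorem~\ref{thm:diagonalize} with $\beta = 1$, with no further argument, and your bookkeeping (identifying negative diagonal entries of $D$ with negative vertices via $d_{ii} = f(v_i)$) is precisely the intended justification.
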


\section{Proof of the Main Result}\label{sec:proof1}

In 1990, Grone et al.\ \cite{grone1990laplacian} proved that for any graph $G$, $\mu(G)\geqslant q(G)$ where $q(G)$ is the number of vertices in $G$ which are not pendant but adjacent to a pendant vertex. The following Lemma follows from the same. We also give a direct proof for trees using Algorithm~\ref{alg:1} and Theorem~\ref{thm:diagonalize}.

\begin{lemma}\label{lem:ubpenultimate}
    For any tree $T,\ \mu(T)\geqslant p(T)$.
\end{lemma}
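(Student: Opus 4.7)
The plan is to prove $\mu(T)\geqslant p(T)$ directly from Algorithm~\ref{alg:1} run with $\alpha=-1$, combined with Corollary~\ref{cor:mu}. The strategy is to exhibit $p(T)$ distinct vertices that are guaranteed to receive a negative inertia, namely the penultimate vertices themselves.

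First I would dispose of the trivial cases $|V(T)|\leqslant 2$, where $p(T)=0$ and there is nothing to prove. For $|V(T)|\geqslant 3$ there is at least one non-leaf vertex; I would root $T$ at such a vertex $r$. The key consequence of this choice of root is that every leaf $\ell$ of $T$ becomes a leaf in the rooted tree (it has degree $1$ and its unique neighbour is its parent, since $\ell\neq r$). Therefore every penultimate vertex $v$ has at least one child in the rooted tree that is a leaf of $T$.

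Next I would track what Algorithm~\ref{alg:1} does with $\alpha=-1$. Each leaf $\ell$ is initialised with $f(\ell)=d(\ell)-1=0$, and the algorithm merely skips leaves, so $f(\ell)=0$ persists at the moment $\ell$'s parent is processed. Moreover, the only edges the algorithm ever deletes are those from a processed vertex (which must be non-leaf, having entered the else-branch) to its parent, so no edge incident to a leaf is ever removed. In particular, for every penultimate vertex $v$, the leaf-child $\ell$ identified above is still a child of $v$ with $f(\ell)=0$ when the algorithm reaches $v$. This forces $v$ into the else-branch of the algorithm, which assigns $f(v)=-\tfrac{1}{2}<0$. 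Hence every one of the $p(T)$ penultimate vertices is negative at termination, and by Corollary~\ref{cor:mu}, $\mu(T)\geqslant p(T)$.

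The main thing to be careful about is ensuring that the processing of one penultimate vertex does not sabotage another: if $v_1$ and $v_2$ are both penultimate and (say) $v_2$ is an ancestor of $v_1$, processing $v_1$ removes the edge $v_1v_2$, but this only removes a non-leaf child from $v_2$'s child-set and does not disturb $v_2$'s own leaf-child, which is still present with value $0$ when $v_2$ is processed. So the argument that every penultimate vertex enters the else-branch remains valid simultaneously for all of them, and no further calculation is needed.
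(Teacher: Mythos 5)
Your proof is correct and follows essentially the same route as the paper's: with $\alpha=-1$ every leaf is initialised to $0$, so each penultimate vertex has a zero-child, enters the else-branch, and receives inertia $-\tfrac{1}{2}$, giving $p(T)$ negative vertices and hence $\mu(T)\geqslant p(T)$ via Corollary~\ref{cor:mu}. The paper states this in one sentence; your additional care about choosing a non-leaf root and about edge removals not disturbing leaf-children just makes explicit details the paper leaves implicit.
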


\begin{proof} 
    When $\alpha$ is $-1$,  Algorithm~\ref{alg:1} initializes all the leaf vertices with a value of $0$ and hence every penultimate vertex gets an inertia of $-1/2$.
\end{proof}

\subsection{Handling Adjacent Two-degree Vertices}

In this section, we show that any upper bound (of at least $1$) on $\gamma(T)/\mu(T)$ that one can prove on trees with no adjacent $2$-degree vertices can be extended to all trees. This helps us in simplifying our later proofs.

\begin{definition}
    A path in a graph $G$ is called \emph{clean} if all its internal vertices have degree exactly two in $G$.    
\end{definition}

\begin{definition}
    The \emph{contraction} of a set of vertices $S\subseteq V(G)$ in $G$ replaces the vertices in $S$ with a single new vertex, which is adjacent to the neighbors of all the vertices in $S$.
\end{definition}

\begin{lemma}\label{lem:ratios}
    Let $T'$ be a tree obtained from a tree $T$ by contracting a clean path of length three. Then 
        \[\frac{\gamma(T)}{\mu(T)} \leqslant \frac{\gamma(T')}{\mu(T')}.\]
\end{lemma}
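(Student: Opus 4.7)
The plan is to reduce the claimed ratio inequality to two separate bounds: $\gamma(T) \leqslant \gamma(T') + 1$ and $\mu(T) \geqslant \mu(T') + 1$. Given these, Lemma~\ref{lem:ubmu} applied to $T'$ yields $\mu(T') \leqslant \gamma(T')$, so
\[\gamma(T)\,\mu(T') \leqslant (\gamma(T')+1)\,\mu(T') \leqslant \gamma(T')\,\mu(T') + \gamma(T') = \gamma(T')(\mu(T')+1) \leqslant \gamma(T')\,\mu(T),\]
which rearranges to $\gamma(T)/\mu(T) \leqslant \gamma(T')/\mu(T')$, since $\mu(T), \mu(T') \geqslant 1$.

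Let $u, v_1, v_2, w$ denote the contracted clean path (with $v_1, v_2$ the degree-two vertices) and $z$ the new vertex in $T'$. For $\gamma(T) \leqslant \gamma(T') + 1$, I start from a minimum dominating set $D'$ of $T'$. If $z \in D'$, take $D = (D' \setminus \{z\}) \cup \{u, w\}$; the pair $u, w$ dominates $v_1, v_2$ together with every former neighbor of $u, w$ in $T$, and $D' \setminus \{z\}$ dominates everything else. If $z \notin D'$, some $x \in D'$ is a neighbor of $z$ in $T'$; since $T$ is a tree, $x$ is a neighbor in $T$ of exactly one of $u$ or $w$. Accordingly, set $D = D' \cup \{v_2\}$ (so that $x$ dominates $u$ and $v_2$ dominates $v_1, v_2, w$) or $D = D' \cup \{v_1\}$ symmetrically. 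In each case $|D| = |D'| + 1$.

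For $\mu(T) \geqslant \mu(T') + 1$, I run Algorithm~\ref{alg:1} with $\alpha = -1$ on both trees, rooted so that $u$ lies on the path from the root to $w$ in $T$. Then the children of $z$ in $T'$ are precisely the children of $u$ other than $v_1$ together with the children of $w$, and the subtrees hanging below them are identical in the two runs, yielding identical inertias. The crux is to show that (a) the inertia of $u$ in $T$ equals the inertia of $z$ in $T'$, so subsequent processing coincides, and (b) among $\{w, v_2, v_1\}$ exactly one vertex receives a negative inertia. By Corollary~\ref{cor:mu}, (a) and (b) together give $\mu(T) = \mu(T') + 1$. In the generic case, writing $\rho = f(w)$, the recurrence gives $f(v_2) = (\rho - 1)/\rho$ and $f(v_1) = -1/(\rho - 1)$; plugging these into the recurrence for $f(u)$ and then expanding $\rho = d_T(w) - 1 - \sum 1/c_i$ collapses $f(u)$ to the same expression as $f(z)$, and checking the three sign regimes $\rho > 1$, $0 < \rho < 1$, and $\rho < 0$ confirms exactly one negative value among $\{f(w), f(v_2), f(v_1)\}$. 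The main obstacle is handling Algorithm~\ref{alg:1}'s exceptional zero-child branches, triggered when $f(w) = 0$, $f(v_2) = 0$, or when a zero-child arising in the subtree below $w$ forces the exception at $w$ itself; each such pattern reassigns some inertias to $-1/2$ or $2$ and deletes an edge, but the resulting handful of cases can be checked by direct computation to verify (a) and (b) in every instance.
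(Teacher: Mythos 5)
Your proposal is correct and follows essentially the same route as the paper: the same mediant-style manipulation using $\mu(T')\leqslant\gamma(T')$, the same construction showing $\gamma(T)\leqslant\gamma(T')+1$, and the same telescoping computation showing that the inertia of $u$ equals that of $z$ while exactly one of $w,v_2,v_1$ goes negative. The only difference is that the paper sidesteps the zero-child case analysis you defer to ``direct computation'' by adopting the convention $1/0=\infty$ in the extended reals, which makes the generic recurrence cover the exceptional branches uniformly; your enumerated cases do all check out, but that device would let you avoid writing them.
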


\begin{proof}     
    We shall first show that $\mu(T) = \mu(T') + 1$. To analyze Algorithm~\ref{alg:1} without splitting cases when a vertex gets an $f$-value of zero, we will set $1/0 = \infty$ and work with the extended real number system. This only results in the following changes in Algorithm \ref{alg:1}. If a vertex $v$ has a zero-child, then instead of setting $f(v)=-1/2$, we will get $f(v)=-\infty$.
    The modified algorithm will also change the inertia of one of the children from zero to $2$. However, we need not remove the edge from $v$ to its parent, since the parent gets a contribution of $-1/f(v)=-1/\infty=0$ from $v$. This simulates the effect of deleting the edge from $v$ to its parent. Therefore, this alternate analysis doesn't affect the final sign of $f(v)$ for any vertex $v$. This approach is used only in the proof of this Lemma.
    
    Let $(a, b, c, d)$ be a clean path in $T$ with $b$ being a child of $a$. Let $b_1, \dots, b_l$ be the other children of $a$, and $e_1, \dots, e_k$ be the children of $d$. After contracting $(a, b, c, d)$ into a single vertex $e$, the children of $e$ are $b_1, \dots, b_l, e_1, \dots, e_k$. 
    We will use $f_T$ and $f_{T'}$ respectively to denote the inertia computed on $T$ and $T'$. Since $f_T(b) = 1 - \frac{1}{f_T(c)}$ and $f_T(c) = 1 - \frac{1}{f_T(d)}$, we see that $1 - \frac{1}{f_T(b)} = f_T(d)$. Using this (in the third step) we can see that

     \[\begin{aligned}
        f_T(a) & = (l+1) - \left(\frac{1}{f_T(b)} + \sum_{j=1}^l \frac{1}{f_T(b_j)} \right) \\
            & = \left(1 - \frac{1}{f_T(b)}\right) + \left(l - \sum_{j=1}^l \frac{1}{f_T(b_j)}\right) \\
             & = f_T(d) + \left(l - \sum_{j=1}^l \frac{1}{f_T(b_j)}\right) \\ 
             &= \left( k - \sum_{i=1}^k \frac{1}{f_T(e_i)} \right) + \left( l - \sum_{j=1}^l \frac{1}{f_T(b_j)} \right) \\
             & = (k +l) - \left(\sum_{i=1}^k \frac{1}{f_T(e_i)} + \sum_{j=1}^l \frac{1}{f_T(b_j)} \right) \\
             & = f_{T'}(e).
    \end{aligned}\]
    
    Thus, this contraction does not affect the execution of the algorithm in other parts of the tree. Hence, all vertices of $T'$ other than $e$ retain their inertia as in $T$. One can verify that either $f(d) < 0$ or $f(c) < 0$ (when $0 \leqslant f(d) < 1$) or $f(b) < 0$ (when $f(d) \geqslant 1$). 
    Therefore, $\mu(T)=\mu(T')+1$.

    Next we show that $\gamma(T) \leqslant \gamma(T') + 1$. Let $D'$ be a minimum dominating set of $T'$. If $e \in D'$, then $D = (D' \setminus \{e\}) \cup \{a, d\}$ is a dominating set of $T$. If $e \not\in D'$ then $D = D' \cup \{x\}$, where $x$ is the vertex in $\{b, c\}$ which is at a distance of $3$ from the vertex in $D'$ which dominates $e$ is a dominating set of $T$. 
    Therefore, $\gamma(T) \leqslant \gamma(T') + 1$. 

    Hence
    \[\frac{\gamma(T)}{\mu(T)}\leqslant\frac{\gamma(T')+1}{\mu(T')+1}\leqslant \frac{\gamma(T')}{\mu(T')},\]
    where the second inequality is a simple observation from the fact that
    $\gamma(T')/\mu(T') \geqslant 1$ (Lemma~\ref{lem:ubmu}).  
\end{proof}

\subsection{A New Dominating Set Algorithm}
\label{sec:NewDom}
Let $\mathcal{T}$ denote the set of all finite trees with no adjacent $2$-degree vertices. We design a new algorithm (Algorithm~\ref{alg:2}) which simulates a run of Algorithm~\ref{alg:1} with $\alpha=-1$ on a tree $T \in \mathcal{T}$ and constructs a subset $D$ of $V(T)$ in the same pass using a weight propagation. We will then show that $D$ is a dominating set of size at most $\mu(T)+\frac{1}{3}(p(T)-1)$. Note that, since Algorithm~\ref{alg:2} simulates Algorithm~\ref{alg:1} with $\alpha = -1$, the number of negative vertices in $T$ at the end of Algorithm~\ref{alg:2} is equal to $\mu(T)$. Recall that, by Lemma~\ref{lem:ratios}, it is enough to establish an upper bound for trees in $\mathcal{T}$ in order to obtain an upper bound for all trees.

Throughout this section, we fix an arbitrary tree $T$ from $\mathcal{T}$ and the set computed by Algorithm~\ref{alg:2} on $T$ as $D$. We start by making two observations on the run of Algorithm~\ref{alg:2} on $T$ before proving the key result that $D$ is a dominating set of $T$ (Lemma~\ref{4by3domination}).

\begin{algorithm}[!ht]
    \caption{}\label{alg:2}
    \textbf{Input:} Tree $T\in\mathcal{T}$\\
    \textbf{Output: } A dominating set of $T$.
    
    \begin{algorithmic}[1] 
        \State Root $T$ at a penultimate vertex.
        \State Initialize $f(v) := d(v) -1$ for all vertices $v$, $D :=\emptyset$ and \[ w(v) := 
            \begin{cases}
                4/3,  &  \text{if $v$ is a penultimate vertex}\\
                0, & \text{otherwise}
            \end{cases}
            \] 
        \State Process the non-leaf vertices bottom-up so that each vertex is processed after all its children (all leaves are assumed to be processed).
        \For{a non-leaf vertex $v$}
            \If{$f(c) \neq 0$ for all children $c$ of $v$}
                \State $f(v) := f(v) - \sum\limits_{\substack{c}} \frac{1}{f(c)}$, where the sum over all children of $v$.
            \Else
                \State select one child $c$ of $v$ for which $f(c) = 0$
                \State $f(v) := -\frac{1}{2}$, $f(c) := 2$
                \State if $v$ has a parent $u$, remove the edge $vu$
            \EndIf
            \If{$f(v)<0$ \textbf{and} $v$ is not a penultimate vertex}
                \State $w(v):=w(v)+1$
            \EndIf
            \If{[($w(v)\geqslant 4/3$) or ($f(v)=0$ and $w(v)\geqslant 1$)] \textbf{and} at least one child of $v$ is not dominated by $D$} 
                   \State $D := D\cup \{v\}$ and push a weight of ($w(v)-1$) to $v$'s parent. 
                \Else
                    \State Push a weight of $w(v)$ to $v$'s parent 
                \EndIf
        \EndFor
    \State \textbf{return} $D$
    \end{algorithmic}
\end{algorithm}

\begin{observation}\label{obs:3.1}
    Every non-leaf vertex $v$ pushes a weight of at least $1/3$ to its parent unless $v$ is added to $D$ and $v$ is a zero-child of its parent.
\end{observation}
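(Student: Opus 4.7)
The plan is to induct on the height of $v$ in the rooted tree $T$. Fix a non-root non-leaf vertex $v$ and let $W$ denote the value of $w(v)$ immediately before Algorithm~\ref{alg:2} performs the conditional check on line~11 that may add $v$ to $D$. Write $W = w_0(v) + P(v) + Z(v)$, where $w_0(v) \in \{0, 4/3\}$ is the initialisation, $P(v) \geqslant 0$ is the total weight pushed to $v$ by its children, and $Z(v) \in \{0,1\}$ records the bump applied when $v$ has a zero-child. The analysis will split on $Z(v)$ and then on whether $v$ is added to $D$.

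First I would handle the case $Z(v) = 1$, i.e.\ $v$ has a zero-child. Here $f(v) = -1/2$ after processing, ruling out the trigger $f(v) = 0$, and $W \geqslant 1$. If $v$ is added to $D$, inclusion must be via $W \geqslant 4/3$, giving push $W - 1 \geqslant 1/3$; otherwise the push is $W \geqslant 1$. Either way $f(v) \neq 0$, so $v$ cannot be a zero-child of its parent and the stated exception is vacuously avoided.

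Next would come the case $Z(v) = 0$. I would apply the inductive hypothesis to each non-leaf child $c$ of $v$: either $c$ pushes at least $1/3$ to $v$, or $c \in D$ and $c$ is a zero-child of $v$. The second alternative contradicts $Z(v) = 0$, so every non-leaf child of $v$ contributes at least $1/3$ to $P(v)$. If $v \notin D$, the push equals $W$, and I need $W \geqslant 1/3$: when $v$ is penultimate this is immediate from $w_0(v) = 4/3$; when $v$ is not penultimate, $v$ has no leaf neighbour, and since the root is penultimate and $v$ is non-root and non-leaf, $v$ has at least one child in the rooted tree, necessarily non-leaf, pushing at least $1/3$. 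If instead $v \in D$, the push is $W - 1$; the trigger $W \geqslant 4/3$ yields push $\geqslant 1/3$, and the only remaining possibility is that inclusion fired via $f(v) = 0$ together with $W \geqslant 1$, in which case $v$ is a zero-child of its parent (since no later event modifies $f(v)$ before its parent is processed) and we are precisely in the stated exception, where the push can be as low as $0$.

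The main obstacle I anticipate is purely bookkeeping: keeping straight which of the two inclusion triggers is active in each sub-case, and verifying in the "no zero-child, non-penultimate, not added to $D$" sub-case that the choice of a penultimate root really does guarantee $v$ has at least one non-leaf child. Beyond that the argument reduces to direct comparisons against the thresholds $4/3$ and $1$ that drive the algorithm, which is why I expect no genuine difficulty once the induction is organised cleanly.
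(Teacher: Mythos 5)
Your proof is correct and takes essentially the same approach as the paper's: a bottom-up induction that tracks the weight reaching $v$, splits on whether $v$ has a zero-child, and then checks which trigger (if any) places $v$ in $D$. Your choice to induct on height rather than on the distance to the nearest leaf below even sidesteps a small imprecision in the paper's induction hypothesis (a child need not be closer to a leaf than its parent), but the substance of the argument is identical.
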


\begin{proof} 
    We prove this by induction on the distance $d_v$ of $v$ to the nearest leaf below it. The base case is when $v$ is a penultimate vertex ($d_v = 1)$. In this case, $v$ pushes a weight of $4/3 - 1 = 1/3$ to its parent, confirming the claim. 
    Let $v$ be a vertex with $d_v > 1$ and let us assume that for every vertex $x$ with $d_x < d_v$, the claim is true. In particular, the claim is true for every child of $v$. 
    
    If every child of $v$ is a zero-child and all of them are added to $D$, then $f(v) = -1/2$ and $v$ is not added to $D$.
    Hence $w(v) \geqslant 1$ is pushed to its parent. Otherwise, $v$ gets a total weight of at least $1/3$ from its child(ren) which is pushed to its parent unless $v$ is added to $D$.
    The vertex $v$ will be added to $D$ only if either $w(v) \geqslant 4/3$, in which case a weight of at least $1/3$ is pushed to its parent or if $f(v) = 0$ when processing $v$, in which case $v$ is exempted from pushing any weight to its parent $u$ as it is a zero-child of $u$ which is added to $D$.
\end{proof}

\begin{observation}\label{obs:3.2}
    If $v$ is a negative vertex with no zero-child, then it pushes a weight of at least $2/3$ to its parent.
\end{observation}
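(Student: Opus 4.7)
The plan is a direct case analysis once one extracts the structural consequences of the two hypotheses. Because leaves initialise with $f=0$, a vertex with no zero-child has no leaf child, so $v$ must be a deep vertex; its initial weight is therefore $0$ and the zero-child bonus on line 10 is never triggered, making $w(v)$ exactly the sum of the pushes that $v$ receives from its (necessarily non-leaf) children. By Observation~\ref{obs:3.1} each of these pushes is at least $1/3$, so $w(v)\geqslant k/3$ where $k$ is the number of children of $v$. Negativity of $v$ on the non-zero-child branch is equivalent to $\sum_{c}1/f(c)>d(v)-1$.

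I would first dispose of the case $v\notin D$: the push equals $w(v)$, and whenever $v$ has at least two children the bound $w(v)\geqslant 2/3$ is immediate. The only remaining subcase is $k=1$, in which $v$ has degree two (the root is penultimate, so $v$ itself is not the root) and its unique child $c$ satisfies $f(c)\in(0,1)$. The no-adjacent-two-degree condition on $\mathcal{T}$ then forces $c$ to have degree at least three, and $f(c)>0$ precludes any zero-child or leaf child of $c$, so $c$ has at least two non-leaf children and hence $w(c)\geqslant 2/3$; when $c\notin D$ this yields $\pi(v)=\pi(c)\geqslant 2/3$.

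The substantive work lies in the subcases where a vertex is placed in $D$ -- either $v$ itself, or the child $c$ just described. When $v\in D$ the algorithm guarantees $w(v)\geqslant 4/3$ (since $f(v)\neq 0$) and the push is $w(v)-1$, so the target becomes $w(v)\geqslant 5/3$. The intended route is to exploit the negativity inequality $\sum_c 1/f(c)>d(v)-1$ quantitatively: it forces at least one child $c^{*}$ to have an extreme $f$-value -- either small and positive or negative of small magnitude -- and an audit of the algorithm one level below $c^{*}$ should show that a child with such an $f$-value must itself push strictly more than $1/3$, supplying the missing weight.

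The principal obstacle I anticipate is converting the $f$-value constraint on a child into a lower bound on its push. The cleanest route is probably to refine the induction used for Observation~\ref{obs:3.1} with a joint invariant that links $f(u)$ to the push delivered by $u$, so that the quantitative negativity at $v$ automatically translates into the sharper push bound at $v$. With such an invariant in hand the remaining arithmetic in all of the $D$-subcases above should be routine.
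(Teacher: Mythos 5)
Your structural reductions are fine, and your treatment of the subcases in which neither $v$ nor (in the one-child case) its child enters $D$ agrees with the paper's. But the proposal stops exactly where the proof has to do its real work: every subcase in which a relevant vertex is added to $D$ is deferred to an unspecified ``joint invariant'' that you never formulate, let alone verify. Membership in $D$ only guarantees $w(v)\geqslant 4/3$, so the push $w(v)-1$ is only bounded below by $1/3$; you need $w(v)\geqslant 5/3$ and you do not obtain it. The same shortfall appears in the one-child case when the child $c$ is in $D$: then $c$ pushes only $w(c)-1\geqslant 1/3$ to $v$, and $v$ (whose only child is now dominated, hence $v\notin D$) forwards only that $1/3$. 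Your proposed repair --- extracting a child $c^{*}$ with $f(c^{*})\in(0,1)$ from the inequality $\sum_c 1/f(c)>d(v)-1$ and auditing the level below --- merely reproduces the same problem one level down, since that child may itself be in $D$. This is a genuine gap, not routine arithmetic.

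The divergence from the paper occurs in your very first step. The paper's proof opens with the assertion that $w(v)\geqslant 1$ because $v$ is negative: in the analysis every negative non-penultimate vertex carries a unit of weight credit (this is also what the accounting $w(T)=\frac{4}{3}p(T)+(\mu(T)-p(T))$ in Lemma~\ref{lem:ubgamma} presumes). With that credit, the two children of $v$ (or, when $v$ has a single undominated child $x$, the at least two children of $x$, each pushing $1/3$ through $x$) supply a further $2/3$, giving $w(v)\geqslant 5/3$ and hence a push of at least $2/3$ whether or not $v\in D$. You instead read line 10 of Algorithm~\ref{alg:2} literally, conclude that a negative vertex with no zero-child receives no credit at all, and hence that $w(v)$ is exactly the sum of its children's pushes; that missing unit is precisely the $1/3$ you cannot find in any of the $D$-subcases. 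To complete the argument you must either justify the credit $w(v)\geqslant 1$ for negative vertices under your reading of the algorithm, or work with the weight scheme the paper's analysis actually uses; as written, the proposal does not prove the observation.
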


\begin{proof}
    Since $v$ is a negative vertex, $w(v)\geqslant 1$.     
    If $v$ has at least two children, then according to Observation~\ref{obs:3.1}, it receives a weight of at least $1/3$ from each child since none of them is a zero-child. Consequently, the total weight of $v$ is at least $5/3$, allowing it to push a weight of at least $2/3$ to its parent.
    
    Suppose $v$ has only one child, say $x$, then $x$ is not a leaf since $v$ has no zero-child. Moreover, $x$ must have at least two children since $T$ does not have two adjacent $2$-degree vertices. If $x$ is dominated when $v$ is being processed, then $v$ pushes all its weight to its parent. Otherwise, no child of $x$ is in $D$, and so all of them push at least $1/3$ weight to $x$, which in turn pushes this weight to $v$. As a result, $v$ has a weight of at least $5/3$ and thus pushes at least $2/3$ to its parent.
\end{proof}

\begin{lemma}\label{4by3domination}
    $D$ dominates $T$. 
\end{lemma}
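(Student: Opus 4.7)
The plan is to prove the lemma by strong induction on the processing order of Algorithm~\ref{alg:2}, establishing the following sub-claim: \emph{whenever a non-leaf vertex $v$ is processed and some child of $v$ is undominated by $D$ at that moment, $v$ is added to $D$ during its processing.} Granted the sub-claim, the lemma follows quickly. For any non-root vertex $u$, if $u$ is still undominated by $D$ at the moment its parent $p$ is processed, then $u$ appears as an undominated child of $p$ and the sub-claim forces $p\in D$, dominating $u$. Since Algorithm~\ref{alg:2} roots $T$ at a penultimate vertex $r$, $r$ has a leaf child that only $r$ can dominate, so the sub-claim applied to $r$ also places $r$ into $D$. Hence every vertex of $T$ is dominated by $D$.

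To prove the sub-claim, I would verify that the weight condition on line~12 of Algorithm~\ref{alg:2} is met, by case analysis on the type of $v$. If $v$ is penultimate, $w(v)\geqslant 4/3$ already holds from the initialization in line~2. If $v$ is deep and has a zero-child, the zero-child handling boosts $w(v)$ by $1$, while any undominated child of $v$ (necessarily outside $D$) pushes at least $1/3$ to $v$ by Observation~\ref{obs:3.1}, giving $w(v)\geqslant 4/3$. If $v$ is deep with no zero-child, every child contributes at least $1/3$ by Observation~\ref{obs:3.1}, so when $v$ has four or more children the required bound is immediate.

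The main obstacle is the remaining sub-case, where $v$ is deep with no zero-child and of small degree (two or three children). Here the generic $1/3$ per-child lower bound of Observation~\ref{obs:3.1} is too weak, and I would cascade the ``undominated child'' property down through the subtree of the undominated child $c$. The inductive hypothesis applied to $c$, combined with $c$ itself being undominated in $T$, forces every child of $c$ to be dominated yet outside $D$, and hence to have a grandchild of $c$ in $D$; the hypothesis $T\in\mathcal{T}$ rules out long thin chains of $2$-degree vertices in $c$'s subtree, keeping penultimate (weight-generating) vertices close; and Observation~\ref{obs:3.2} amplifies the push from any child that is negative and without a zero-child from $1/3$ up to $2/3$. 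Carefully combining these structural and weight-amplification arguments should force the push from $c$ to be large enough that $w(v)\geqslant 4/3$ in every configuration, closing the sub-claim and the lemma.
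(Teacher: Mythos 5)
Your reformulation via the sub-claim is sound --- it is equivalent to the lemma, since a vertex can only be dominated by members of its closed neighbourhood, all of which are settled by the time its parent finishes processing --- and your treatment of the easy cases ($v$ penultimate; $v$ deep with a zero-child; $v$ deep with at least four children, none a zero-child) is correct and follows from Observation~\ref{obs:3.1} exactly as you say. But the remaining case, a deep vertex with no zero-child and only two or three children, is precisely where the paper's proof does all of its work, and your paragraph for it is a plan rather than a proof ("carefully combining these \dots should force \dots in every configuration").

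Moreover, the plan aims at the wrong target. You propose to force $w(v)\geqslant 4/3$ by weight amplification alone, but the membership test on line~12 of Algorithm~\ref{alg:2} has a second disjunct, ``$f(v)=0$ and $w(v)\geqslant 1$'', and that disjunct is not decorative: in the paper's Cases~2 and~3 the feeders can be guaranteed to supply only a total weight of $1$ (three feeders at $1/3$ each) or $2/3$, and the argument is completed not by finding more weight but by computing the inertias $f(x_i)$, $f(v)$, $f(u)$ from the simulated run of Algorithm~\ref{alg:1} and showing that some vertex of $N[v]$ must end with $f\leqslant 0$ --- which either creates a zero-child (hence a $+1$ weight bonus at its parent via line~10) or activates the second disjunct. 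This inertia computation is the whole reason Algorithm~\ref{alg:2} simulates Algorithm~\ref{alg:1}, and your sketch never engages with it: Observation~\ref{obs:3.2} uses the sign of $f$ only as a black box, and the no-adjacent-degree-two hypothesis by itself does not prevent configurations in which every feeder pushes exactly $1/3$ and every intermediate vertex placed in $D$ absorbs a full unit of weight, leaving strictly less than $4/3$ at $v$. So the decisive ingredient --- the four-subcase analysis of the inertia values --- is missing, and the proof is incomplete.
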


\begin{proof}
    For a vertex $v \in V(G)$, the notation $N[v]$ denotes the set of vertices adjacent to $v$, including $v$ itself. Since all the penultimate vertices are added to $D$, all the penultimates (which includes the root of $T$), and the leaves are dominated. Let $v$ be any non-root deep vertex of $T$ and $u$ be its parent. We will verify that the total weight entering $N[v]$ (including both initialization and propagation) is at least $4/3$, or at least $1$ with $f(u)=0$ when $u$ is being processed. If no vertex in $N[v] \setminus \{u\}$ is added to $D$, then the total weight entering $N[v]$ reaches $u$. Since $u$ has an undominated child ($v$), and it satisfies the weight and $f(u)$ condition, $u$ will be added to $D$. This ensures the inclusion of at least one vertex of $N[v]$ in $D$,  guaranteeing that $v$ is dominated by $D$.

    We call the children of any vertex in $N[v]$ that do not belong to $N[v]$ as \emph{feeders} of $v$. These feeders consist of the siblings and grandchildren of $v$. The parents of these feeders will be called \emph{feeder inputs} of $v$. Every deep vertex in $T$ has either at least two children or a sibling. Hence, $v$ has at least two feeder inputs and two feeders. If any of the feeders is a zero-child, then its parent is negative and gets an initial weight of $1$. The remaining feeder inputs (at least one exists) gets a weight of at least $1/3$ from its feeders (Observation~\ref{obs:3.1}). Hence, $N[v]$ gets a total weight of at least $4/3$ and we are done. So we can assume henceforth that no feeder is a zero-child. Hence, every feeder pushes a weight of at least $1/3$ to its parent (Observation~\ref{obs:3.1}).
    Let $F$ denote the set of feeders of $v$. We split the analysis based on $\lvert F\rvert$. 
    
    \begin{case}\label{case1}
        $\lvert F\rvert\geqslant 4$
    \end{case}
    
    Since each feeder pushes a weight of at least $1/3$ to its parent, $N[v]$ gets a total weight of at least $4/3$.

    \begin{case}\label{case2}
        $\lvert F\rvert=3$
    \end{case}
    
    If $v$ has a feeder $z$ which is negative and has no zero-child, then $z$ pushes a weight $2/3$ to its parent (Observation~\ref{obs:3.2}). Hence $N[v]$ gets a total weight of at least $4/3$ and we are done. So we can assume that every feeder of $v$  either has a positive inertia or has a zero-child. We call a feeder vertex which either has a positive inertia or a zero-child as \emph{pseudo-positive}. 
    Since $\lvert F\rvert =3$, the feeders push a weight of at least $1$ to $N[v]$. It is enough to show that there is either a negative vertex in $N[v]$ or $f(u) = 0$ when $u$ is processed.      
    The key observation is that if all the children of a vertex $x$ are pseudo-positive, then $f(x)\leqslant deg(x)-1$. For every child $x$ of $v$, since all the children of $x$ are pseudo-positive (being feeders), we conclude that $f(x)\in(0,deg(x)-1]$.
    
    There are four subcases to consider - two when $v$ has one sibling and two grandchildren, and two when $v$ has no siblings but has three grandchildren (See Figure~\ref{fig:cases}).
    
    \begin{enumerate}
        \item $v$ has one sibling and two children $x_1$, $x_2$ with one child each.
        
        In this case $f(x_1),f(x_2)\in (0,1]$ and hence $f(v)\leqslant 0$. Thus either $v$ or $u$ is negative.
    
        \item $v$ has one sibling $v_1$ and one child $x_1$ with two children.
        
        In this case $f(x_1)\in (0,2]$ and $f(v)\leqslant \frac{1}{2}$. If $f(v)\leqslant 0$, then we are done. Otherwise, we will use the fact that $v_1$ is pseudo-positive. If $v_1$ has a zero-child, then $f(u)=2-\frac{1}{f(v)}\leqslant 0$.
        
        If $f(v_1)>0$, $f(u)=2-\frac{1}{f(v)}-\frac{1}{f(v_1)}<0$.
    
        \item $v$ has no siblings but three children $x_1$, $x_2$, $x_3$ with one child each.
        
        In this case $f(x_i)\in (0,1], \forall i\in \{1,2,3\}$ and hence $f(v)\leqslant 0$.
        \item $v$ has no siblings but two children $x_1$, $x_2$, with one child for $x_1$ and two for $x_2$.
        
       In this case $f(x_1)\in (0,1]$ and $f(x_2)\in (0,2]$. Hence $f(v)\leqslant \frac{1}{2}$. So either $f(v)\leqslant 0$ or $f(u)<0$.
    \end{enumerate}

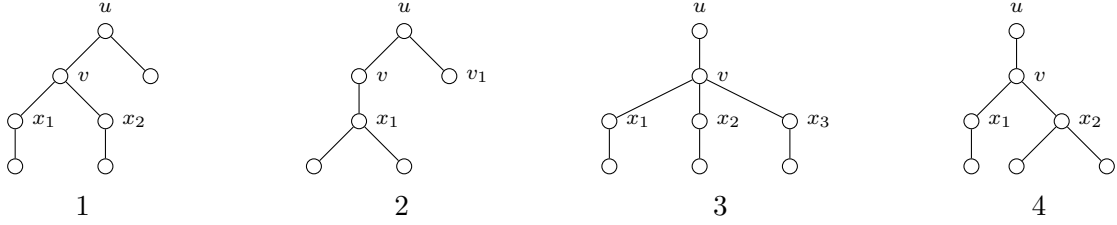
\begin{figure}
    \centering
    \begin{subfigure}[b]{0.23\textwidth}
        \centering
        \begin{tikzpicture}[>=stealth, every node/.style={circle, draw=black, fill=white, inner sep=2pt}, level distance=6mm, sibling distance=12mm]
        
            \node[label=above:{\scriptsize$u$}] {}
                child {node[label=right:{\scriptsize$v$}] {}
                    child {node[label=right:{\scriptsize$x_1$}] {}
                       child {node {}}
                    }
                    child {node[label=right:{\scriptsize$x_2$}] {}
                       child {node {}}
                    }
                }
                child {node {}};
        \end{tikzpicture}
        \caption*{1}
        \label{subfig:1}
    \end{subfigure}
    \hfill
    \begin{subfigure}[b]{0.23\textwidth}
        \centering
        \begin{tikzpicture}[>=stealth, 
        every node/.style={circle, draw=black, fill=white, inner sep=2pt}, level distance=6mm, sibling distance=12mm]
        
            \node[label=above:{\scriptsize$u$}] {}
                child {node[label=right:{\scriptsize$v$}] {}
                    child {node[label=right:{\scriptsize$x_1$}] {}
                       child {node {}}
                       child {node {}}
                    }
                }
                child {node[label=right:{\scriptsize$v_1$}] {}};
        \end{tikzpicture}
        \caption*{2}
        \label{subfig:2}
    \end{subfigure}
    \hfill
    \begin{subfigure}[b]{0.23\textwidth}
        \centering
        \begin{tikzpicture}[>=stealth,
        every node/.style={circle, draw=black, fill=white, inner sep=2pt}, level distance=6mm, sibling distance=12mm]
             \node[label=above:{\scriptsize$u$}] {}
                child {node[label=right:{\scriptsize$v$}] {}
                    child {node[label=right:{\scriptsize$x_1$}] {}
                        child {node {}}
                    }
                    child {node[label=right:{\scriptsize$x_2$}] {}
                        child {node {}}
                    }
                    child {node[label=right:{\scriptsize$x_3$}] {}
                       child {node {}}
                    }
                };
        \end{tikzpicture}
        \caption*{3}
        \label{subfig:3}
    \end{subfigure}
    \hfill
    \begin{subfigure}[b]{0.23\textwidth}
        \centering
        \begin{tikzpicture}[>=stealth,
        every node/.style={circle, draw=black, fill=white, inner sep=2pt}, level distance=6mm, sibling distance=12mm]
            \node[label=above:{\scriptsize$u$}] {}
                child {node[label=right:{\scriptsize$v$}] {}
                    child {node[label=right:{\scriptsize$x_1$}] {}
                        child {node {}}
                    }
                    child {node[label=right:{\scriptsize$x_2$}] {}
                         child {node {}}
                          child {node {}}
                    }
                };
    \end{tikzpicture}
        \caption*{4}
        \label{subfig:4}
    \end{subfigure}
    
    \caption{Four subcases when $|F|=3$ and all feeders of $v$ are pseudo-positive in the proof of Lemma~\ref{4by3domination}}
    \label{fig:cases}
\end{figure}

    \begin{case}\label{case3}
       $\lvert F\rvert=2$
    \end{case}
    
    Since $v$ has at least two grandchildren, $v$ has no siblings in this case. Let $x_1$, $x_2$ be the children of $v$, each with exactly one child.
    If both grandchildren are negative vertices with no zero-child, then each of them pushes a weight of at least $2/3$ to $N[v]$, and we are done. Hence, at least one of them, say the child of $x_1$ is pseudo-positive. In this case, $f(x_1)\leqslant 1$. If $f(x_1)\leqslant 0$ or $f(x_2)\leqslant 0$, we are done and hence we can assume $f(x_1)\in (0,1]$ and $f(x_2)>0$. Then, $f(v)<1$. Hence either $f(v)\leqslant 0$ or $f(u)<0$.
    
    From Cases \ref{case1}, \ref{case2}, and \ref{case3}, it follows that the weight entering $N[v]$ is at least $4/3$, or at least $1$ with $f(u)=0$ when $u$ is processed. Thus, at least one vertex from $N[v]$ is included in $D$, which implies that $D$ dominates $v$.
\end{proof}

\begin{remark}
As an aside, we note that $D$ is infact a minimum dominating set of $T$. This can be seen by comparing Algorithm~\ref{alg:2} with the following greedy algorithm for finding a minimum dominating set in any tree $T$. Begin by setting $D=\emptyset$ and process the vertices of $T$ bottom up. If a vertex $v$ has at least one child that is not dominated by $D$, add $v$ to $D$. A version of this algorithm is analyzed in \cite{stackoverflow} to show that the process results in a minimum dominating set. 
\end{remark}

\begin{lemma}\label{lem:ubgamma}
    \[
        |D| \leqslant \mu(T)+\frac{1}{3}(p(T)-1).
    \]
\end{lemma}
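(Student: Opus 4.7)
The plan is to set up a weight-conservation identity for Algorithm~\ref{alg:2}, reduce the claim to a lower bound on the weight pushed out of the root, and then establish that bound inductively over the tree.

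First I would observe that every weight quantity in Algorithm~\ref{alg:2} is non-negative, so by tracking where weight is created and destroyed one obtains a conservation identity. Weight is injected only at initialization ($4/3$ per penultimate vertex) and at each execution of the else-branch ($+1$ per zero-child processor); it is removed only when a vertex joins $D$ (subtracting one unit when pushing) or when the root pushes weight to its absent parent. Let $\alpha$ be the number of else-branch executions --- equivalently, the number of vertices $v$ with $f(v)=-\tfrac{1}{2}$ --- and let $W$ be the total weight pushed out of the root. Conservation gives $\tfrac{4}{3}p(T) + \alpha = |D| + W$. Writing $\beta$ for the number of vertices whose negative inertia is computed in the if-branch, Corollary~\ref{cor:mu} yields $\mu(T) = \alpha + \beta$, and a short algebraic manipulation shows that the lemma is equivalent to
\[W \geq p(T) - \beta + \tfrac{1}{3}.\]

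To establish this inequality, I would prove by induction on the subtree structure that for every non-root internal vertex $v$ of $T$,
\[\mathrm{Push}(v) \geq p_v - \beta_v,\]
where $p_v$ and $\beta_v$ count penultimate and normal-negative vertices in the subtree rooted at $v$. The root is penultimate, is a zero-child processor, and is always added to $D$, so $W = \tfrac{4}{3} + \sum_{c} \mathrm{Push}(c)$ (sum over the root's children). Summing the inductive bound over these children together with $\sum p_c = p(T)-1$ and $\sum \beta_c = \beta$ then delivers exactly the displayed inequality.

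The inductive step splits by the type of $v$ (penultimate vs.\ deep; zero-child processor vs.\ if-branch; normal-negative or not; in $D$ or not). In most of these cases --- $v$ penultimate, $v$ deep and a zero-child processor, or $v$ deep and normal-negative with $v\in D$ --- the $\tfrac{4}{3}$ initialization, the $+1$ boost, or the $+1$ increment to $\beta_v$ covers the $-1$ debit incurred by $v\in D$, and the bound follows directly from the inductive hypothesis via the subtree conservation identity. The main obstacle is the remaining case: $v$ is deep, not a zero-child processor, has $f(v)\geq 0$, but joins $D$ because some child $c^*$ of $v$ is undominated. Here there is no matching increment at $v$ itself to absorb the $-1$ debit, so I would need to extract an extra unit of slack from the subtrees of the children of $v$. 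I expect this slack to come from a detailed local analysis around $v$ in the same spirit as Cases~\ref{case1}, \ref{case2}, \ref{case3} of Lemma~\ref{4by3domination}: the constraint that $c^*$ has no neighbor in $D$, combined with $T\in\mathcal{T}$ (no two adjacent degree-$2$ vertices), forces a local structure in which either several penultimate descendants contribute multiple $\tfrac{1}{3}$-bonuses, or Observation~\ref{obs:3.2} supplies a $\tfrac{1}{3}$-bonus from a normal-negative deeper in the subtree, or an undominated zero-child-processor descendant (which under the $|D|$-accounting gains $+1$ of slack) furnishes the missing unit. Orchestrating this case analysis carefully --- and in particular bookkeeping which children of $v$ contribute how much slack --- is the central technical step I would expect to be the hardest.
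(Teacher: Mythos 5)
Your conservation identity is the right starting point, but from there you take a far harder road than necessary, and the road is not finished: the inductive claim $\mathrm{Push}(v)\geqslant p_v-\beta_v$ is exactly where all the difficulty sits, you only sketch where the missing unit of slack ``should'' come from in the critical case ($v$ deep, $f(v)\geqslant 0$, $v$ added to $D$ in the if-branch), and you yourself flag that orchestrating that case analysis is the unresolved hard step. As written, the proposal is therefore incomplete, and it is not obvious that the proposed strengthening is even true without essentially redoing the case analysis of Lemma~\ref{4by3domination} with different bookkeeping.

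The difficulty is largely self-inflicted. You charge the $+1$ else-branch boost to every zero-child processor, including the penultimate vertices, so your $\alpha$ satisfies $\alpha\geqslant p(T)$ and you are forced to recover roughly $p(T)$ units through the lower bound $W\geqslant p(T)-\beta+\tfrac13$. The paper's reading of Algorithm~\ref{alg:2} --- made explicit in the base case of Observation~\ref{obs:3.1}, where a penultimate vertex pushes $4/3-1=1/3$, not $7/3-1$ --- is that a penultimate vertex's total self-contributed weight is $4/3$, and the $+1$ boost is charged only to deep vertices having a zero-child. Every such vertex is negative and is distinct from the $p(T)$ penultimate vertices (which are all negative), so the total injected weight is at most $\tfrac43 p(T)+(\mu(T)-p(T))$. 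Since a vertex joins $D$ only when $w(v)\geqslant 1$, each addition destroys one unit of weight and all weights stay non-negative, so $|D|$ is at most the total injected weight minus the at least $1/3$ retained at the root. That is the entire proof:
\[
|D|\ \leqslant\ \tfrac43 p(T)+\bigl(\mu(T)-p(T)\bigr)-\tfrac13\ =\ \mu(T)+\tfrac13\bigl(p(T)-1\bigr),
\]
with no induction and only the trivial bound $W\geqslant \tfrac13$ in place of your $W\geqslant p(T)-\beta+\tfrac13$.
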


\begin{proof}  
    According to Lemma~\ref{4by3domination}, the set $D$ is a dominating set. It is easy to see that and $|D|$ is at most the total initial weight. The root vertex $v$ being a penultimate vertex, gets an initial weight of $4/3$ and retains a weight of at least $1/3$. Hence,
    \[
     \lvert D \rvert  \leqslant w(T)-\frac{1}{3} =\frac{4}{3}p(T)+(\mu(T)-p(T))-\frac{1}{3} =\mu(T)+\frac{1}{3}(p(T)-1).
    \]
\end{proof} 

\subsection{Completing the Proof of Theorem~\ref{thm:1}}

    Let $T\in\mathcal{T}$. We run Algorithm~\ref{alg:2} on $T$ to obtain a set $D$. By Lemma~\ref{4by3domination}, $D$ is a dominating set and by Lemma~\ref{lem:ubgamma} the size of $D$ is bounded above by $\mu(T)+\frac{1}{3}p(T)$. Hence we have 
    \[ 
        \gamma(T) 
            <           \mu(T)+\frac{1}{3}p(T) 
            \leqslant   \mu(T)+\frac{1}{3}\mu(T)
            =           \frac{4}{3}\mu(T), 
    \]
    where the second inequality follows from Lemma~\ref{lem:ubpenultimate}.
    
    Thus $\gamma(T)/\mu(T)< 4/3$ for any tree in $\mathcal{T}$. From Lemma~\ref{lem:ratios}, we can extend this upper bound to all trees. The lower bound for the ratio $\gamma(T)/\mu(T)$ is immediate from Lemma~\ref{lem:ubmu}. Therefore, for all trees $T$, 
    \[1\leqslant \frac{\gamma(T)}{\mu(T)}<\frac{4}{3}.\]
    This completes the proof of Theorem~\ref{thm:1}.

\section{Tight Example}\label{sec:example}

Hitherto, the tree with the largest known value of $\gamma/\mu$ was a tree on 65 vertices and it had $\gamma/\mu = 25/24$ \cite{cardoso2017laplacian}. In Figure~\ref{fig:5by4}, we show a tree on $11$ vertices with $\gamma=5$ and $\mu=4$. By adding more leaf vertices to the penultimates, we can find an infinite family of trees with $\gamma=5$ and $\mu=4$. Now, we construct an infinite family of trees to show that our upper bound of $4/3$ is tight.

Consider the tree $T_k$ illustrated in Figure~\ref{fig:4by3}. $T_k$, $k \geqslant 2$, has $k$ isomorphic subtrees hanging from the root. We only illustrate the leftmost subtree in the figure. By analyzing the run of Algorithm~\ref{alg:1} on $T_k$, we can see that the negative vertices are the root vertex $r$ and the penultimate vertices. Hence $\mu(T_k)=3k+1$.
Furthermore, a minimum dominating set of $T_k$ must include at least four vertices from each subtree hanging from $r$. Hence $\gamma(T_k) \geqslant 4k$. It is straightforward to construct a dominating set by picking four vertices from each subtree.
Hence $\gamma(T_k) = 4k$ and  
\[\frac{\gamma(T_k)}{\mu(T_k)}=\frac{4k}{3k+1}.\]
As $k$ increases, this ratio approaches $4/3$ showing that the upper bound in Theorem~\ref{thm:1} is optimal.

\begin{figure}[H]
    \centering
    \begin{minipage}{0.45\textwidth}
        \centering
        \begin{tikzpicture}[>=stealth, every node/.style={circle, draw=black, fill=white, inner sep=2pt}, level distance=7mm, 
        level 1/.style={sibling distance=20mm}, 
        level 2/.style={sibling distance=12mm}]
        
            \node [label=right:{\scriptsize $0$}] {}
                child {  node [label=left:{\scriptsize $2$}] {}
                    child { node [label=left:{\scriptsize $-\frac{1}{2}$}] {}
                        child { node [label=left:{\scriptsize $2$}]{} } }
                    child { node [label=left:{\scriptsize $-\frac{1}{2}$}] {}
                        child { node [label=left:{\scriptsize $2$}]{} } }
                }
                child {  node [label=right:{\scriptsize $2$}] {}
                    child { node [label=right:{\scriptsize $-\frac{1}{2}$}] {}
                        child { node [label=right:{\scriptsize $2$}]{} } }
                    child { node [label=right:{\scriptsize $-\frac{1}{2}$}] {}
                        child { node [label=right:{\scriptsize $2$}]{} } }
                };
        \end{tikzpicture}
        \caption{11 vertex tree with $\gamma=5$ and $\mu=4$. Inertia of each vertex is indicated.}
        \label{fig:5by4}
    \end{minipage}
    \hfill
    \begin{minipage}{0.45\textwidth}
        \centering
        \begin{tikzpicture}[>=stealth, 
     solidnode/.style={circle, draw=black, fill=black, inner sep=2pt}, 
     hollownode/.style={circle, draw=black, fill=white, inner sep=2pt},level distance=7mm, sibling distance=12mm]

        \node (1) [hollownode, label=above:{\scriptsize$r$}, label=right:{\scriptsize$-\frac{1}{2}$}] {}
               child { node (2) [hollownode, label=left:{\scriptsize$2$}, label=above:{\scriptsize$u$}] {}
               child { node (5) [hollownode, label=left:{\scriptsize$\frac{1}{2}$}] {}
               child { node (7) [hollownode, label=left:{\scriptsize$2$}] {}
               child { node (9) [solidnode, label=left:{\scriptsize$-\frac{1}{2}$}] {}
               child { node (11) [hollownode, label=left:{\scriptsize$2$}] {} }
            }
               child { node (10) [solidnode, label=right:{\scriptsize$-\frac{1}{2}$}] {}
               child { node (12) [hollownode, label=right:{\scriptsize$2$}] {} }
                }
            }
         }
               child { node (6) [solidnode, label=right:{\scriptsize$-\frac{1}{2}$}] {}
               child { node (8) [hollownode, label=right:{\scriptsize$2$}] {} }
               }
             }
        child [missing]
        child [missing];

        \draw[dashed] (1) -- ++(0,-0.7);
        \draw[dashed] (1) -- ++(0.7,-0.5);

         \end{tikzpicture}
     \caption{Tree with inertia of vertices. The penultimate vertices are indicated by dark circles.}
     \label{fig:4by3}
    \end{minipage}
\end{figure}

\section{Improving the Upper Bound for High-Degree Trees}
\label{sec:proof2}

Let $\mathcal{T}_k$, $k\geqslant 3$, denote the set of all finite trees in which every deep vertex has degree at least $k$. We first establish an upper bound for the domination number in terms of the number of its penultimate vertices, and then see that this improves the upper bound of Theorem~\ref{thm:1} for such trees.

Consider any tree $T\in\mathcal{T}_k$. We run Algorithm~\ref{alg:3} on $T$ to obtain a set $D$ and proceed to analyse $D$ as in  Section~\ref{sec:NewDom}. This algorithm is much simpler than Algorithm~\ref{alg:2} since it does not rely on Algorithm~\ref{alg:1} and only the penultimate vertices are initialised with a non-zero weight.

\begin{algorithm}
    \caption{}\label{alg:3}
    \textbf{Input:} $k\geqslant 3$ and $T\in \mathcal{T}_k$ \\
    \textbf{Output: } A dominating set of $T$.
    \begin{algorithmic}[1]
        \State Root $T$ at a penultimate vertex
        \State Initialize $D=\emptyset$ and $\epsilon= \frac{1}{(k-2)(k+1)}$,
            \[w(v) := 
            \begin{cases}
               1+\epsilon,   & \text{if $v$ is a penultimate vertex}\\
            0, & \text{otherwise}    
            \end{cases}
            \]
        \State Process the vertices bottom up. 
            \For{a vertex $v$}
                \If{$w(v)\geqslant 1+\epsilon$}
                   \State Add $v$ to $D$ and push a weight of ($w(v)-1$) to $v$'s parent
                \Else
                    \State Push a weight of $w(v)$ to $v$'s parent
                \EndIf
            \EndFor
        \State \textbf{return} $D$
    \end{algorithmic}
\end{algorithm}

\begin{observation}\label{obs:5.1}
    Every non-leaf vertex pushes a weight of at least $\epsilon$ to its parent.
\end{observation}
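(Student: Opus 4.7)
The plan is to prove Observation~\ref{obs:5.1} by strong induction on the order in which Algorithm~\ref{alg:3} processes vertices, with the inductive hypothesis being that every non-leaf child of the vertex currently under consideration has already pushed at least $\epsilon$ upward. The conditional in lines 5--8 of the algorithm splits the argument into exactly two short subcases regardless of where in the tree $v$ lies.

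For the base case, I would take $v$ to be a penultimate vertex. It is initialised with $w(v) = 1 + \epsilon$, and any non-leaf children it may have can only add to this total by the inductive hypothesis. Thus $w(v) \geqslant 1 + \epsilon$ when $v$ is processed, so $v$ is added to $D$ and pushes $w(v) - 1 \geqslant \epsilon$ to its parent.

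For the inductive step, let $v$ be a deep vertex. Since $T$ is rooted at a penultimate vertex, $v$ is not the root and hence has a parent; since $v$ is deep, $\deg(v) \geqslant k$ gives at least $k - 1 \geqslant 2$ children; and since $v$ is not penultimate, none of its children is a leaf. By the inductive hypothesis each such child contributes at least $\epsilon$, so when $v$ is processed, $w(v) \geqslant (k-1)\epsilon \geqslant 2\epsilon$. If $w(v) \geqslant 1 + \epsilon$, then $v$ is added to $D$ and pushes $w(v) - 1 \geqslant \epsilon$; otherwise $v$ pushes its full weight $w(v) \geqslant 2\epsilon > \epsilon$.

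The proof is essentially mechanical and I foresee no real obstacle. The only point worth highlighting is the interaction between the minimum-degree condition and the rooting choice: by rooting at a penultimate vertex, every deep vertex loses exactly one incidence to its parent edge, so it retains at least $k - 1 \geqslant 2$ non-leaf children, which is precisely what ensures that even the non-dominating branch still delivers weight strictly greater than $\epsilon$.
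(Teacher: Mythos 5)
Your proof is correct and follows essentially the same route as the paper: induction from the leaves upward, with penultimate vertices as the base case and the inductive hypothesis supplying at least $\epsilon$ from each non-leaf child. The only difference is cosmetic --- you invoke the degree bound to get $w(v)\geqslant (k-1)\epsilon$, whereas the paper only needs the weight $\epsilon$ from a single non-leaf child in the case $v\notin D$.
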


\begin{proof}
    We prove this by induction on the distance $d_v$ of $v$ to the nearest leaf below it. The base case is when $v$ is a penultimate vertex ($d_v = 1)$, in which case, $v$ pushes $\epsilon$ to its parent. 
    Let $v$ be a vertex with $d_v > 1$ and let us assume that for every vertex $u$ with $d_u < d_v$, the claim is true. 
    If $v\in D$, then $w(v)\geqslant 1+\epsilon$, ensuring that a weight of at least $\epsilon$ is pushed to its parent. Otherwise, by induction hypothesis, it receives a weight of at least $\epsilon$ from its children (none of which are leaves), and all of it is passed on to its parent.  
\end{proof}

\begin{lemma}\label{domk}
    $D$ dominates $T$.
\end{lemma}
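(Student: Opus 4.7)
The plan is to show, one class of vertex at a time, that every vertex of $T$ lies in $N[D]$. Leaves and penultimates are handled immediately: the initial weight of $1+\epsilon$ at every penultimate vertex forces each penultimate into $D$, and any such vertex also dominates its leaf children. Since the tree is rooted at a penultimate, the root itself lies in $D$. The remaining task is to verify that every non-root deep vertex is dominated.

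Fix a non-root deep vertex $v$ with parent $u$; I want to exhibit an element of $D\cap N[v]$. The easy cases are when $v$ or some child of $v$ is already in $D$, so assume neither happens and the goal reduces to showing $u\in D$. Because $v$ is deep, none of its children are leaves, so each child is penultimate or deep; a penultimate child would lie in $D$ by its initial weight, contradicting the assumption. Hence every child of $v$ is deep.

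Next I would lower-bound the weight that $v$ forwards to $u$. Each child $x$ of $v$ is deep, so $\deg(x)\geqslant k$ gives $x$ at least $k-1$ non-leaf children, and by Observation~\ref{obs:5.1} each of them pushes at least $\epsilon$ to $x$. Therefore $w(x)\geqslant(k-1)\epsilon$, and since $x\notin D$ the whole of $w(x)$ is forwarded to $v$. Summing over the at least $k-1$ children of $v$ yields $w(v)\geqslant(k-1)^{2}\epsilon$, and because $v\notin D$ all of this weight is passed to $u$.

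The main obstacle---and really the only calculation in the proof---is the parent step: checking that this pushed weight is exactly enough to force $u$ across the threshold $1+\epsilon$. If $u$ is penultimate then $u\in D$ by its initial weight. If $u$ is deep, then $u$ has at least $k-2$ children besides $v$, each contributing at least $\epsilon$ by Observation~\ref{obs:5.1}, so
\[
w(u)\;\geqslant\;(k-1)^{2}\epsilon+(k-2)\epsilon\;=\;(k^{2}-k-1)\epsilon.
\]
With $\epsilon=1/((k-2)(k+1))=1/(k^{2}-k-2)$, the right-hand side equals exactly $(k^{2}-k-1)/(k^{2}-k-2)=1+\epsilon$, so $u$ crosses the threshold and joins $D$. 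The tightness of this equality is what motivates the specific value of $\epsilon$ chosen in Algorithm~\ref{alg:3}.
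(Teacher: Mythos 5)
Your proof is correct and follows essentially the same route as the paper's: reduce to a non-root deep vertex $v$ none of whose closed neighbourhood is yet in $D$, propagate $(k-1)^2\epsilon$ up to the parent $u$, add $(k-2)\epsilon$ from the siblings, and check that $(k^2-k-1)\epsilon = 1+\epsilon$. Your explicit observation that every child of $v$ must itself be deep, and the arithmetic verification of the threshold, are welcome clarifications but not a different argument.
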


\begin{proof}
    Let $v$ be an arbitrary vertex in $T$. We will show that $v$ is dominated by $D$. Since all the penultimate vertices of $T$ are added to $D$, we can assume that no child or grandchild of $v$ is a leaf. We denote parent of $v$ as $u$.
    If $v$ or any of its children is in $D$, then we are done.
    If no child of $v$ is in $D$, by Observation~\ref{obs:5.1}, each grandchild of $v$ pushes a minimum weight of $\epsilon$ to its parent. Consequently, each child of $v$ gets and pushes to $v$ a weight of at least $(k-1)\epsilon$. Thus $v$ gets a weight of at least $(k-1)^2\epsilon$. Since $v$ is not in $D$, this weight is passed to its parent $u$. If $u$ is a penultimate vertex, then we are done. Otherwise, all siblings of $v$ push a minimum weight of $\epsilon$ to $u$ and since there are at least $(k-2)$ siblings for $v$, we have $w(u) \geqslant (k-1)^2\epsilon+(k-2)\epsilon \geqslant 1+\epsilon$, ensuring the inclusion of $u$ in $D$. Therefore $v$ is dominated.
\end{proof}

\begin{proof}[\textbf{Proof of Theorem~\ref{thm:2}}]

Let $T \in \mathcal{T}_k$, $k \geqslant 3$. As the root vertex of $T$ is a penultimate vertex, it retains a weight of at least $\epsilon = 1/((k-2)(k+1))$, and from Lemma~\ref{domk}, \[
     \gamma(T)\leqslant \lvert D \rvert  \leqslant w(T)-\epsilon < w(T)=\left(1+\frac{1}{(k-2)(k+1)}\right)p(T).
    \]
\end{proof}

\begin{remark}
    We can run Algorithm~\ref{alg:3} on any tree $T \in \mathcal T$ (i.e., trees without adjacent $2$-degree vertices) with $\epsilon = 1$. Since every vertex $v$ in $T$ not dominated by a penultimate vertex will have at least two non-leaf grandchildren, either $v$ or one of its child will be in $D$ and hence $D$ will dominate $T$. This, together with Lemma~\ref{lem:ratios}, shows that $\gamma(T)/\mu(T) < 2$ for every tree $T$. However we needed a much more involved argument to establish the optimal upper bound of $4/3$ (Theorem~\ref{thm:1}).  
\end{remark}

\begin{corollary}\label{cor:1}
   For any tree $T$ in which all deep vertices have degree at least $k$, $k\geqslant3$, \[1\leqslant \frac{\gamma(T)}{\mu(T)}<1+\frac{1}{(k-2)(k+1)}.\]
\end{corollary}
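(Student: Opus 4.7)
The plan is to deduce Corollary~\ref{cor:1} directly by combining the three ingredients already established earlier in the paper: Theorem~\ref{thm:2} (upper bound on $\gamma(T)$ in terms of $p(T)$ for high-degree trees), Lemma~\ref{lem:ubpenultimate} ($\mu(T) \geqslant p(T)$ for every tree), and Lemma~\ref{lem:ubmu} ($\mu(G) \leqslant \gamma(G)$ for every graph). No new algorithmic or spectral argument is needed; this is purely a chaining of inequalities.

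For the upper bound, I would start from Theorem~\ref{thm:2}, which gives
\[
    \gamma(T) < \left(1 + \tfrac{1}{(k-2)(k+1)}\right) p(T),
\]
and then replace $p(T)$ by $\mu(T)$ using Lemma~\ref{lem:ubpenultimate} (valid since $1 + 1/((k-2)(k+1)) > 0$, so the inequality is preserved). Dividing both sides by $\mu(T)$ (which is positive for any tree on at least two vertices, as every penultimate vertex contributes a negative inertia value via Algorithm~\ref{alg:1}) yields the claimed strict upper bound on $\gamma(T)/\mu(T)$. For the lower bound, I would simply invoke Lemma~\ref{lem:ubmu}, which gives $\mu(T) \leqslant \gamma(T)$ and hence $\gamma(T)/\mu(T) \geqslant 1$.

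The only mildly subtle point is handling the degenerate case where $T$ has very few vertices so that $\mu(T)$ or $p(T)$ might vanish; but since the hypothesis requires deep vertices of degree at least $k \geqslant 3$, the tree is either a star (trivially dominated by its center, with $\mu$ and $p$ both equal to $1$) or contains at least one penultimate vertex, so $\mu(T) \geqslant p(T) \geqslant 1$ and the division is justified. I do not expect any real obstacle here; the corollary is essentially an immediate consequence of Theorem~\ref{thm:2} together with the penultimate-vertex lower bound on $\mu(T)$.
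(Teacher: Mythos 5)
Your proposal is correct and matches the paper's proof exactly: the paper also derives the corollary by chaining Theorem~\ref{thm:2} with Lemma~\ref{lem:ubpenultimate} for the upper bound and invoking Lemma~\ref{lem:ubmu} for the lower bound. Your extra care about degenerate small trees is harmless but not needed beyond what the paper states.
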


\begin{proof}
    Follows from Lemma~\ref{lem:ubmu},  Theorem~\ref{thm:2}, and Lemma~\ref{lem:ubpenultimate}.
\end{proof}

\begin{remark}
    Using Lemma~\ref{lem:ratios}, we can extend Corollary~\ref{cor:1} to include trees which can be obtained from a tree $T  \in \mathcal{T}_k$ by replacing one or more edges of $T$ with paths of length $0$ or $1$ modulo $3$.
\end{remark}

\section{Tightness of the bound $\nu(T)/\gamma(T) < 2$ for trees} \label{sec:laplacian2}

Recall that $\nu(G) = m_G[2,n]$. In this section, we show that the bound $\nu(T) < 2\gamma(T)$ for trees $T$ established by Cardoso, Jacobs, and Trevisan \cite{cardoso2017laplacian} is tight.  

Let $T_n$ be the tree constructed from the path $P_{3n} = v_1 v_2 \dots v_{3n}$, $n\geqslant 2$, by adding a leaf vertex adjacent to every vertex $v_{3i-1}$ for $2 \leqslant i \leqslant n$ (See Figure~\ref{fig:caterpillar}). Root $T_n$ at $v_{3n}$ and run Algorithm~\ref{alg:1} with $\alpha=-2$. The number of Laplacian eigenvalues in the interval $[2,n]$, that is $\nu(T)$, is equal to the number of non-negative vertices at the end of this run. One can verify that all the leaves and the vertices $v_{3i}$, $1 \leqslant i \leqslant n$ end with negative values and the remaining vertices end with positive values. Hence $\nu(T_n) = 2n-1$. It is easy to check that $\gamma(T_n) = n$.

\begin{figure}[H]
    \centering
    \begin{tikzpicture}[>=stealth,
        every node/.style={circle, draw=black, fill=white, inner sep=2pt}]
        
        \foreach \i/\val in {1/{$-1$},2/{$1$},3/{$-1$},4/{$1$},5/{$1$},6/{$-1$},7/{$1$},8/{$1$},9/{$-2$}} {
            \node (v\i) at (\i,0) 
                [label=below right:{\scriptsize $v_{\i}$},
                 label=above:{\scriptsize $\val$}] {}; 
        }

        \foreach \i in {1,...,8} {
            \draw (v\i) -- (v\the\numexpr\i+1\relax);
        }

        \node (l5) at (5,-0.8) 
            [label=right:{\scriptsize $-1$}] {};
        \draw (v5) -- (l5);

        \node (l8) at (8,-0.8) 
            [label=right:{\scriptsize $-1$}] {};
        \draw (v8) -- (l8);
    \end{tikzpicture}
    \caption{$T_3$ with inertia of vertices when rooted at $v_9$.}
    \label{fig:caterpillar}
\end{figure}
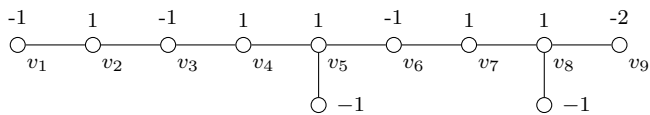

Therefore, \[\frac{\nu(T_n)}{\gamma(T_n)} = 2 - \frac{1}{n}.\] 
As $n$ increases, this ratio approaches 2.
\section{Concluding remarks}\label{sec:conclusion}
In this paper we studied the relationship between the Laplacian spectrum and domination number in trees. The \emph{signless Laplacian matrix} of a graph $G$ is $Q(G) = D(G) + A(G)$ and its spectrum is called the signless Laplacian spectrum of $G$. It is known that \cite{brouwer2011spectra} a graph $G$ is bipartite if and only if the Laplacian spectrum and the signless Laplacian spectrum of $G$ are equal. Therefore our results on trees can also be stated in terms of their signless Laplacian spectrum.

Cardoso, Jacobs and Trevisan \cite{cardoso2017laplacian} also proposed characterizing all graphs $G$ with $\mu(G)=\gamma(G)$. Characterizing trees $T$ with $\mu(T)=\gamma(T)$ is itself interesting and remains open.

\bibliographystyle{unsrt}
\bibliography{ref}

@article{hedetniemi2016domination,
  title={Domination number and {L}aplacian eigenvalue distribution},
  author={Hedetniemi, Stephen T and Jacobs, David P and Trevisan, Vilmar},
  journal={European Journal of Combinatorics},
  volume={53},
  pages={66--71},
  year={2016},
  publisher={Elsevier}
}

@book{brouwer2011spectra,
  title={Spectra of graphs},
  author={Brouwer, Andries E and Haemers, Willem H},
  year={2011},
  publisher={Springer Science \& Business Media}
}

@article{braga2013distribution,
  title={On the distribution of {L}aplacian eigenvalues of trees},
  author={Braga, Rodrigo O and Rodrigues, Virg{\'\i}nia M and Trevisan, Vilmar},
  journal={Discrete Mathematics},
  volume={313},
  number={21},
  pages={2382--2389},
  year={2013},
  publisher={Elsevier}
}

@article{cardoso2017laplacian,
  title={Laplacian distribution and domination},
  author={Cardoso, Domingos M and Jacobs, David P and Trevisan, Vilmar},
  journal={Graphs and Combinatorics},
  volume={33},
  pages={1283--1295},
  year={2017},
  publisher={Springer}
}

@article{jacobs2021most,
  title={Most {L}aplacian eigenvalues of a tree are small},
  author={Jacobs, David P and Oliveira, Elismar R and Trevisan, Vilmar},
  journal={Journal of Combinatorial Theory, Series B},
  volume={146},
  pages={1--33},
  year={2021},
  publisher={Elsevier}
}

@article{jacobs2011locating,
  title={Locating the eigenvalues of trees},
  author={Jacobs, David P and Trevisan, Vilmar},
  journal={Linear Algebra and its Applications},
  volume={434},
  number={1},
  pages={81--88},
  year={2011},
  publisher={Elsevier}
}

@book{horn2012matrix,
  title={Matrix analysis},
  author={Horn, Roger A and Johnson, Charles R},
  year={2012},
  publisher={Cambridge university press}
}

@article{guo2025laplacian,
  title={{L}aplacian eigenvalue distribution, diameter and domination number of trees},
  author={Guo, Jiaxin and Xue, Jie and Liu, Ruifang},
  journal={Linear and Multilinear Algebra},
  volume={73},
  number={4},
  pages={763--775},
  year={2025},
  publisher={Taylor \& Francis}
}

@article{xu2025girth,
  title={Girth and {L}aplacian eigenvalue distribution},
  author={Xu, Leyou and Zhou, Bo},
  journal={arXiv preprint arXiv:2506.00921},
  year={2025}
}

@article{xu2024laplacian,
  title={{L}aplacian eigenvalue distribution and diameter of graphs},
  author={Xu, Leyou and Zhou, Bo},
  journal={Discrete Mathematics},
  volume={347},
  number={7},
  pages={114001},
  year={2024},
  publisher={Elsevier}
}

@article{akbari2025laplacian,
  title={{L}aplacian Eigenvalue Distribution in Terms of Degree Sequence},
  author={Akbari, S and Alaeiyan, M and Darougheh, M},
  journal={Linear Algebra and its Applications},
  year={2025},
  publisher={Elsevier}
}

@article{cui2024laplacian,
  title={{L}aplacian eigenvalue distribution based on some graph parameters},
  author={Cui, Jiaxin and Ma, Xiaoling},
  journal={Filomat},
  volume={38},
  number={19},
  pages={6881--6890},
  year={2024},
  publisher={JSTOR}
}

@article{akbari2023classification,
  title={Classification of trees by {L}aplacian eigenvalue distribution and edge covering number},
  author={Akbari, S and Alaeiyan, M and Darougheh, M and Trevisan, V},
  journal={Linear Algebra and its Applications},
  volume={677},
  pages={221--236},
  year={2023},
  publisher={Elsevier}
}

@article{zhen2025laplacian,
  title={{L}aplacian eigenvalue distribution and girth of graphs},
  author={Zhen, Wenhao and Wong, Dein and Xu, Songnian},
  journal={arXiv preprint arXiv:2504.15772},
  year={2025}
}

@article{sin2020number,
  title={On the number of {L}aplacian eigenvalues of trees less than the average degree},
  author={Sin, Changhyon},
  journal={Discrete Mathematics},
  volume={343},
  number={10},
  pages={111986},
  year={2020},
  publisher={Elsevier}
}

@article{grone1990laplacian,
  title={The {L}aplacian spectrum of a graph},
  author={Grone, Robert and Merris, Russell and Sunder, Viakalathur Shankar},
  journal={SIAM Journal on matrix analysis and applications},
  volume={11},
  number={2},
  pages={218--238},
  year={1990},
  publisher={SIAM}
}

@MISC {stackoverflow,
    TITLE = {Find the minimum size dominating set for a tree using greedy algorithm},
    AUTHOR = {Alehresmann (https://stackoverflow.com/users/9035436/alehresmann)},
    HOWPUBLISHED = {Stack Overflow},
    NOTE = {https://stackoverflow.com/questions/5315298 (version: 2019-10-17)},
    EPRINT = {https://stackoverflow.com/questions/5315298},
    URL = {https://stackoverflow.com/questions/5315298}
}

@article{ahanjideh2022laplacian,
  title={Laplacian eigenvalue distribution and graph parameters},
  author={Ahanjideh, M and Akbari, S and Fakharan, MH and Trevisan, V},
  journal={Linear Algebra and its Applications},
  volume={632},
  pages={1--14},
  year={2022},
  publisher={Elsevier}
}

@article{moon2025laplacian,
  title={Laplacian eigenvalue distribution for unicyclic graphs},
  author={Moon, Sunyo and Park, Seungkook},
  journal={Applied Mathematics and Computation},
  volume={485},
  pages={129022},
  year={2025},
  publisher={Elsevier}
}

@article{xu2024dia,
  title={Diameter vs {L}aplacian eigenvalue distribution},
  author={Xu, Leyou and Zhou, Bo},
  journal={Electronic Journal of Linear Algebra},
  volume={40},
  pages={774--787},
  year={2024}
}

\end{document}